\documentclass[11pt,reqno]{amsart}
\usepackage{amssymb,amsmath,amsthm}
\usepackage{comment,enumitem,url}
\usepackage{xcolor}
\usepackage{cleveref}
\usepackage{mathtools}
\usepackage{a4wide,fullpage}
\usepackage{bm}

\newtheorem{theorem}{Theorem}
\newtheorem{lemma}[theorem]{Lemma}
\newtheorem{corollary}[theorem]{Corollary}

\newtheorem{problem}[theorem]{Problem}

\newcounter{specialstatement}

\newtheorem{conjecture}[specialstatement]{Conjecture}

\theoremstyle{remark}

\theoremstyle{definition}
\newtheorem*{remark}{Remark}

\numberwithin{theorem}{section} \numberwithin{equation}{section}

\setlist[enumerate]{leftmargin=*,label=\rm{(\arabic*)}}
\setlist[itemize]{leftmargin=*}

\renewcommand{\l}{\lambda}

\renewcommand{\a}{\alpha}

\newcommand{\Z}{\mathbb{Z}}
\newcommand{\N}{\mathbb{N}}

\newcommand{\z}{\zeta}

\renewcommand{\pmod}[1]{\ \, \left( \mathrm{mod} \, #1 \right)}

\allowdisplaybreaks
\usepackage{xr}
\makeatletter
\newcommand*{\addFileDependency}[1]{
	\typeout{(#1)}
	\@addtofilelist{#1}
	\IfFileExists{#1}{}{\typeout{No file #1.}}
}
\makeatother

\makeatletter
\@namedef{subjclassname@2020}{%
	\textup{2020} Mathematics Subject Classification}
\makeatother

\title{Congruences for odd unimodal sequences and a problem of Bringmann and Lovejoy}
\author{Koustav Banerjee}
\address{University of Cologne, Department of Mathematics and Computer Science, Weyertal 86-90, 50931 Cologne, Germany}
\email{kbanerj1@uni-koeln.de}
\author{Manosij Ghosh Dastidar}
\address{Technische Universit\"at Wien, Wiedner Hauptstrasse 8--10/104, 1040 Wien, Austria}
\email{gdmanosij@gmail.com}

\allowdisplaybreaks

\subjclass[2020]{05A17, 11P83, 11F37.}
\keywords{Congruences, mock theta functions, unimodal sequences.}

\begin{document}
\maketitle

\begin{abstract}
In this paper, we give a family of the arithmetic progressions in which the numbers of odd unimodal and odd strongly unimodal sequences are divisible by $2$ and by $4$. In particular, we answer a question of Bringmann and Lovejoy on the existence of such arithmetic progressions for odd strongly unimodal sequences modulo $4$. 
\end{abstract}

\section{Introduction and statement of main results}
A \emph{partition} $\l$ of $n\in \mathbb{N}_0$ is a weakly decreasing sequence of positive integers
$(\lambda_1,\cdots,\lambda_{\ell})$ with $\sum_{j=1}^r\l_j=n$ and $p(n)$ denotes the
number of partitions of $n$ with $p(0):=1$. Euler showed that
\begin{equation*}
P(q):=\sum_{n\ge0}p(n)q^n=\frac1{(q)_\infty}.
\end{equation*}
Here, for $n\in \mathbb{N}_0\cup \{\infty\}$, $(a)_n=(a;q)_n:=\prod_{j=0}^{n-1}(1-aq^j)$ denotes the usual {\it $q$-Pochhammer symbol}. Ramanujan \cite{Ramanujan1,Ramanujan2} discovered the congruences
\[
p(5n+4)\equiv0\pmod5,\qquad p(7n+5)\equiv0\pmod7,\qquad p(11n+6)\equiv0\pmod{11}
\]
and conjectured infinite families of congruences modulo powers of $5$, $7$ and $11$. Watson settled \cite{Watson} the conjectures for powers of $5$ and $7$, and later Atkin \cite{Atkin} settled the case for powers of $11$. Using the theory of modular
forms, Ono \cite{Ono2} proved that for $p\ge5$ prime, there exist infinitely many
arithmetic progressions $An+B$ with $p(An+B)\equiv0\pmod p$. Ahlgren \cite{Ahlgren} further extended this family to all moduli coprime to $6$. But, determining the parity of $p(n)$ remains a hard problem. Parkin and Shanks \cite{PS} conjectured that the parity of $p(n)$ is equidistributed. In this aspect, Subbarao~\cite{Subbarao} conjectured that every arithmetic progression contains infinitely
many integers $n$ for which $p(n)$ is even, and infinitely many for which $p(n)$ is odd. Ono \cite{Ono1}
settled the even case, and showed that the odd case holds for a progression if there exists $n$ with $p(n)$ odd. Radu \cite{Radu} settled the Subbarao's
conjecture for the odd case. Recently, Ono and Swaminathan \cite{OS} proved that $p(n)$ takes both parities infinitely often along the quadratic progressions $n=\frac{Dm^2+1}{24}$ with $\gcd(m,6)=1$, for all square-free $D>1$ with $D\equiv-1\pmod{24}$. 

In proving Ramanujan's congruences, modularity\footnote{Note that, $P$ is basically a weakly holomorphic modular form of weight $-\frac 12$ on ${\rm SL}_2(\mathbb{Z})$.} of $P$ plays a crucial role. But many
natural combinatorial generating functions are not modular, instead are built from modular forms
together with false theta functions or mock theta functions. One such combinatorial object is unimodal sequence (see \cite{Auluck,Wright}). A sequence of positive integers is called {\it unimodal} if
\begin{equation}\label{eq:unimodal}
a_1\le a_2\le\dots\le a_r\le c\ge b_1\ge b_2\ge\dots\ge b_s\quad \text{with}\ \ \sum_{j=1}^{r}a_j+c+\sum_{j=1}^{s}b_j=n\quad  \left(r,s\in \mathbb{N}_0\right).
\end{equation}
Let $u(n)$ be the number of unimodal sequences of size $n$, with $u(0):=1$. The corresponding generating function is given by
\[
U(q):=\sum_{n\ge0}u(n)q^n=\frac1{(q)_\infty^2}\sum_{n\ge0}(-1)^nq^{\frac{n(n+1)}2}
\]
We note that $U$ is a {\it mixed false theta function}, i.e., combination of a modular form and a false theta function. A sequence is {\it strongly unimodal} if inequalities in \eqref{eq:unimodal} are strict and the number of such sequences is denoted by ${\rm u}^*(n)$. The corresponding generating function is given by
\[
U^*(q):=\sum_{n\ge0}u^*(n)q^n=\sum_{n\ge0}(-q)^2_nq^{n+1}.
\]
Bryson, Ono, Pitman, and Rhoades \cite{BOPR} showed that $U^*$ is related to mock and quantum modular
forms. Congruences modulo $2$ and $4$ for unimodal-type sequences were conjectured
by \cite{BOPR} and subsequently settled by Chen and Garvan \cite{CG}.

Bringmann and Lovejoy \cite{BL} introduced two restricted unimodal sequences by imposing the parity of its parts. An {\it odd} unimodal sequence is a unimodal sequences in which all parts are odd. Let ${\rm ou}(n)$ and ${\rm ou}^*(n)$ be the numbers of odd unimodal and odd strongly
unimodal sequences of size $n$, with ${\rm ou}(0)={\rm ou}^*(0):=0$. Due to Bringmann and Lovejoy \cite[Theorems~1.1 and~1.2]{BL}, we have,\footnote{Setting $\z=1$ in Theorems 1.1 and 1.2 of \cite{BL}, we get \eqref{eq:gf}.}
\begin{equation}\label{eq:gf}
{\rm OU}(q):=\sum_{n\ge0}{\rm ou}(n)q^n=\sum_{n\ge0}\frac{q^{2n+1}}{(q;q^2)_{n+1}^2},\qquad
{\rm OU}^*(q):=\sum_{n\ge0}{\rm ou}^*(n)q^n=\sum_{n\ge0}q^{2n+1}(-q;q^2)_n^2.
\end{equation}
Up to simple terms, ${\rm OU}$ is a mixed false
theta function and ${\rm OU}^*$ is a mixed mock theta function.
Bringmann and Lovejoy \cite[Proposition 6.1 and Theorem 6.2]{BL} proved the following congruences for ${\rm ou}^*(n)$. 
\begin{theorem}\label{thm:BLparity}
	For $n\in \mathbb{N}$, ${\rm ou}^*(n)$ is odd if and only if $6n-2$ is a square.
\end{theorem}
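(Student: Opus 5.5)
The plan is to reduce the generating function ${\rm OU}^*(q)$ in \eqref{eq:gf} modulo $2$ to a classical $q$-series identity of Rogers type. That identity expresses the reduced series as a sparse (false) theta series, and its exponents can then be matched with the $n$ for which $6n-2$ is a square.

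\emph{Step 1 (reduction mod $2$).} Since $(1+a)^2\equiv 1+a^2\pmod 2$ coefficientwise, we get $(-q;q^2)_n^2\equiv(-q^2;q^4)_n\equiv(q^2;q^4)_n\pmod 2$. Hence
\[
{\rm OU}^*(q)\equiv q\sum_{n\ge0}x^n(x;x^2)_n\pmod 2,\qquad x:=q^2 .
\]
In particular ${\rm ou}^*(N)$ is even whenever $N$ is even.

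\emph{Step 2 (the key identity).} I will invoke Rogers' false theta identity
\[
\sum_{n\ge0}x^n(x;x^2)_n=\sum_{n\ge0}(-1)^n x^{3n^2+2n}\bigl(1+x^{2n+1}\bigr),
\]
which appears in Ramanujan's lost notebook (Andrews--Berndt). If a self-contained argument is wanted, I would prove it as follows. Set $F(t)=\sum_{n\ge 0} t^n(x;x^2)_n$. Using $(x;x^2)_{n+1}=(x;x^2)_n(1-x^{2n+1})$, derive a first-order functional equation relating $F(t)$ and $F(tx^2)$, of Fine type. Then specialize $t=x$ and iterate. Equivalently, one can obtain the identity from a limiting case of Heine's transformation. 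This step is the main obstacle. Everything else is bookkeeping, but this identity must be either cited precisely or derived, and the iteration has to be arranged so that the exponents $3n^2+2n$ and $3n^2+4n+1$ come out exactly.

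\emph{Step 3 (reading off exponents).} Modulo $2$ the signs disappear, so ${\rm ou}^*(N)$ is odd exactly when $N=2e+1$ with $e\in\{3n^2+2n,\ 3n^2+4n+1: n\ge0\}$. These exponents are pairwise distinct: they correspond to distinct values of $12e+4$ below, so no two terms cancel mod $2$. For $N=2e+1$ we compute $6N-2=12e+4$. If $e=3n^2+2n$ this equals $(6n+2)^2$, and if $e=3n^2+4n+1$ it equals $(6n+4)^2$. So $6N-2$ is a square whenever ${\rm ou}^*(N)$ is odd.

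\emph{Step 4 (the converse).} Suppose $6N-2=m^2$ with $m>0$. Since squares are $\equiv0,1,4,9\pmod{12}$ while $6N-2\equiv 10\pmod{12}$ for $N$ even, $N$ must be odd, say $N=2e+1$, so that $m^2=12e+4$. Then $m^2\equiv4\pmod 6$ forces $m\equiv\pm2\pmod 6$, i.e. $m=6n+2$ or $m=6n+4$ with $n\ge0$. Solving for $e$ gives exactly $e=3n^2+2n$ or $e=3n^2+4n+1$ respectively. By Step 3 this means ${\rm ou}^*(N)$ is odd, which proves the equivalence.
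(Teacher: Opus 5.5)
Your proof is correct. The paper does not prove this statement itself: it quotes it from Bringmann and Lovejoy \cite[Proposition 6.1]{BL} and uses it only as the first case of \Cref{thm2}, so there is no in-paper argument to compare against.

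Steps 1, 3 and 4 are complete as written:
\begin{itemize}
\item the reduction $(-q;q^2)_n^2\equiv(-q^2;q^4)_n\equiv(q^2;q^4)_n\pmod 2$ is valid;
\item the identities $12e+4=(6n+2)^2$, resp.\ $(6n+4)^2$, also give the distinctness of the exponents;
\item the residue argument $m\equiv\pm2\pmod 6$ proves the converse.
\end{itemize}

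Citing Rogers' identity in Step 2 is legitimate. If you want a one-line derivation, apply the Rogers--Fine identity to $\sum_{n\ge0}\frac{(x;x^2)_n}{(b;x^2)_n}t^n$, let $b\to0$ and put $t=x$. The general term becomes $(-1)^nx^{3n^2+2n}\frac{1-x^{4n+2}}{1-x^{2n+1}}=(-1)^nx^{3n^2+2n}(1+x^{2n+1})$.

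Your fallback sketch would not get there as stated. The recursion $(1-t)F(t)=1-txF(tx^2)$ iterates to $F(t)=\sum_{n\ge0}(-1)^nt^nx^{n^2}/(t;x^2)_{n+1}$. At $t=x$ this only produces the Eulerian series $\sum_{n\ge0}(-1)^nx^{n^2+n}/(x;x^2)_{n+1}$, not the false theta series. The extra transformation you need is exactly what Rogers--Fine supplies.

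Your argument also relates cleanly to the paper's mod-$4$ machinery. Your Step 1 is the mod-$2$ shadow of \Cref{lem3}, and your series $q\sum_{n\ge0}x^n(x;x^2)_n$ is just $q\nu(-q^2)$ (with the paper's $\nu$) reduced modulo $2$. What you exploit is that modulo $2$ the mock theta function $\nu(-q)=\sum_{n\ge0}(-q;q^2)_nq^n$ collapses to the false theta function $\sum_{n\ge0}(q;q^2)_nq^n$. By Rogers' identity, that function is supported on the sparse exponents $3n^2+2n$ and $3n^2+4n+1$, so no modular input is needed. Modulo $4$ this collapse fails, which is why the paper has to pass through Fine's identity \eqref{thm:fine}, $\omega$, and Watson's identity \eqref{W1}. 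Those lemmas (\Cref{assistlem2} together with \Cref{assistlem0}) could also recover the parity statement, but your route is much shorter and entirely elementary.
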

\begin{theorem}\label{thm:BL62}
	Let $k\in \mathbb{N}_0$ and $r\in \mathbb{N}$ with $1\le r\le k+1$, let $p_r\ge 5$ be prime. For any $j\not\equiv 0\pmod{p_{k+1}}$, if $p_{k+1}\not\equiv 7, 13 \pmod{24}$ or $p_{k+1}\equiv 7, 13\pmod{24}$ and $(\frac{3j}{p_{k+1}})=-1$, then with $P:=\prod_{r=1}^{k}p_r$, we have,
	\begin{align*}
	{\rm ou}^*\left(4P^2p^2_{k+1}n+2P^2p_{k+1} j+\frac{8P^2p^2_{k+1}+1}{3}\right)&\equiv0\pmod4\qquad(j\ \text{odd}),\\
	{\rm ou}^*\left(4P^2p^2_{k+1}n+2P^2p_{k+1} j+\frac{2P^2p^2_{k+1}+1}{3}\right)&\equiv0\pmod4\qquad(j\ \text{even}).
	\end{align*}
\end{theorem}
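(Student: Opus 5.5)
Theorem~\ref{thm:BL62} is a result of Bringmann and Lovejoy; to re-derive it I would work with the generating function ${\rm OU}^*(q)$ modulo $4$. The aim is to express it as a theta-type series whose coefficients are controlled by representations by a binary quadratic form, and then sieve along the progressions.

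\textbf{Step 1: a congruence for ${\rm OU}^*(q)$ modulo $4$.} Starting from the two-variable generating function in \cite{BL} (with $\zeta$ tracking the rank-like statistic), I would specialise at $\zeta=1$ and rewrite ${\rm OU}^*$ as a mixed mock theta function: a quotient of eta products times an indefinite theta series, plus simple terms. Reducing modulo $4$, the eta-quotient prefactor should collapse, since $(-q;q^2)_\infty^2\equiv (q;q^2)_\infty^2$ times a square, and squares modulo $4$ become theta functions. The output I expect is a congruence
\[
{\rm OU}^*(q)\equiv \sum \epsilon(x,y)\, q^{Q(x,y)} \pmod 4,
\]
with $\epsilon(x,y)=\pm1$ and $Q$ an indefinite (or positive) binary quadratic form. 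After the substitution $n\mapsto 6n-2$ this should become $\sum \pm q^{(x^2-2\cdot 3y^2)/\ldots}$. This must be consistent with Theorem~\ref{thm:BLparity}: modulo $2$ only the diagonal squares $6n-2=m^2$ survive.

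\textbf{Step 2: multiplicativity.} With this shape in hand, ${\rm ou}^*(n)\pmod 4$ is given by a weighted count of representations of $N=6n-2$ (or $3n-1$) by the form $Q$. This count should be a multiplicative, Hecke-type function in $N$ for primes $p\ge5$, of the form $\sum_{d\mid N}\chi(d)$ or a twisted version. One checks that the arguments in the theorem map to $N$ with $p_{k+1}\,\|\,N$ and $P^2\mid N$, i.e.\ $N=P^2p_{k+1}M$ with $p_{k+1}\nmid M$. For example, $n=4P^2p^2n'+2P^2pj+(8P^2p^2+1)/3$ gives $3n-1=P^2p(12pn'+6j+8p)$. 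Since $j\not\equiv0\pmod{p_{k+1}}$, the prime $p_{k+1}$ divides $N$ exactly once. A representation-number function $r(N)$ built from $\sum_{d\mid N}\chi(d)$ then picks up the factor $1+\chi(p_{k+1})$, which is $0$ or $2$. The remaining factor coming from $M$ must also be shown to be even, so that the product vanishes modulo $4$.

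\textbf{Step 3: the case split.} The condition $p_{k+1}\not\equiv7,13\pmod{24}$ should correspond exactly to $\chi(p_{k+1})=-1$, or to $p_{k+1}$ being inert for the form's discriminant, which forces the local factor to vanish. When $p_{k+1}\equiv7,13\pmod{24}$ the prime splits. In that case the Legendre condition $\left(\frac{3j}{p_{k+1}}\right)=-1$ shows that $M$, taken modulo $p_{k+1}$, is not represented, so there are no representations. The parity of $j$ selects which constant term makes $N$ land in the right residue class modulo $8$ and $3$.

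\textbf{Main obstacle.} I expect Step~1 to be the hardest part: getting the exact modulo-$4$ identity, including the signs $\epsilon(x,y)$ and correct control of the indefinite theta series so that it genuinely becomes a representation count. Once that identity is in place, Steps~2 and~3 are elementary local computations at $p_{k+1}$ and at the primes dividing $P$.
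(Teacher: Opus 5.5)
There is a genuine gap. Step~1, which carries all the content, is only a hoped-for identity, and the local mechanism you predict in Steps~2--3 is not the one that actually operates.

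For comparison: the paper quotes this theorem from \cite{BL} and does not reprove it, but it follows from \Cref{thm2} by the same check as in the proof of \Cref{C:GD2}. Every argument $N$ of the two progressions satisfies $N\equiv 1\pmod 4$ and $3N-1=2P^2p_{k+1}K$, where $K$ is odd, $p_{k+1}\nmid K$, $K\equiv p_{k+1}\pmod 3$ and $K\equiv 3j\pmod{p_{k+1}}$. No indefinite theta series is needed modulo $4$. By \Cref{lem3} and \eqref{thm:fine} (see the proof of \Cref{assistlem2}), ${\rm ou}^*(4m+1)\equiv c_1(m)\pmod 4$, where $\sum_{m\ge0}c_1(m)q^m=(q^2;q^2)_\infty^3/(q)_\infty^2$ is a weakly holomorphic eta quotient. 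Its mod-$4$ classification (\Cref{L:CC} together with \Cref{thm:BLparity}) shows that ${\rm ou}^*(N)\not\equiv0\pmod 4$ forces $3N-1\in S_1\cup S_2$. That in turn forces $\left(\frac{-2}{p_{k+1}}\right)=-1$, $\nu_{p_{k+1}}(P)$ even, and $K$ a perfect square.

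Your Steps~2--3 do not match this in three ways.

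\emph{The case split.} The primes $p\not\equiv 7,13\pmod{24}$ have density $3/4$, so they cannot be the primes where some quadratic character is $-1$, or the inert primes of some discriminant; that set has density $1/2$. The set $\{7,13\}$ is the intersection of $\left(\frac{-2}{p}\right)=-1$ with $p\equiv1\pmod3$.

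\emph{The divisor-sum model.} The value $2\pmod 4$ comes from a prime with $\left(\frac{-2}{p}\right)=-1$, i.e.\ inert in $\mathbb{Q}(\sqrt{-2})$, occurring to an odd power. This is the opposite of what $\sum_{d\mid N}\chi(d)$ does. Concretely, take a prime $p\equiv1\pmod{24f}$ with $f$ the conductor of $\chi$, and $3N-1=2p$. Then $1+\chi(p)=2$, yet ${\rm ou}^*(N)\equiv0\pmod 4$. On the other hand, ${\rm ou}^*(9)=6$ with $3\cdot9-1=2\cdot13$. Both arguments have the same $2$-part, so no model ${\rm ou}^*(N)\equiv\pm\sum_{d\mid 3N-1}\chi(d)\pmod 4$ can hold.

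\emph{Where the hypotheses are used.} Only for $p_{k+1}\equiv1,11,17,19\pmod{24}$ is the obstruction at $p_{k+1}$ itself, since then $\left(\frac{-2}{p_{k+1}}\right)=1$. For $p_{k+1}\equiv5,23\pmod{24}$, an odd power of $p_{k+1}$ is perfectly compatible with ${\rm ou}^*\equiv2\pmod4$: for example ${\rm ou}^*(7)=2$ while $3\cdot7-1=2^2\cdot5$. The congruence instead comes from $K\equiv p_{k+1}\equiv2\pmod3$, which prevents $K$ from being a square. For $p_{k+1}\equiv7,13\pmod{24}$, $\left(\frac{3j}{p_{k+1}}\right)=-1$ says only that $K\equiv3j$ is not a square, not that there are no representations. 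So the step you leave open, that the remaining factor must also be even, is exactly where the hypotheses enter in two of the three cases.

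Also, the $p_r$ need not be distinct, so $p_{k+1}$ divides $3N-1$ to the odd power $2\nu_{p_{k+1}}(P)+1$, not necessarily exactly once.
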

In particular, Bringmann and Lovejoy \cite[(7.1)--(7.3)]{BL} showed that for $k=0$ and $p\in \{5,7,11\}$, \Cref{thm:BL62} gives
\begin{align}\label{eqn1}
{\rm ou}^*(100n+r)&\equiv0\pmod4,\quad r\in\{37,57,77,97\},\nonumber\\
{\rm ou}^*(196n+r)&\equiv0\pmod4,\quad r\in\{61,89,145\},\nonumber\\
{\rm ou}^*(484n+r)&\equiv0\pmod4,\quad r\in\{125,169,213,257,301,345,389,433,477,521\}.
\end{align}
Moreover, by numerical verifications, they found that the cases $r\in\{37,97\}$ of
\eqref{eqn1} appear to be special cases of ${\rm ou}^*(50n+r)\equiv0\pmod4$ for $r\in\{37,47\}$,
the cases $r\in\{61,145\}$ appear to be special cases of ${\rm ou}^*(98n+r)\equiv0\pmod4$ for $r\in\{47,61\}$, and that all congruences in \eqref{eqn1} appear to be special cases of congruences modulo $242$. Based on these observations, they posed \cite[Section 7 (2)]{BL} the following open problem.
\begin{problem}\label{prob:BL}
	Determine exactly which of the congruences of \Cref{thm:BL62} {\rm (or \cite[Theorem 1.5]{BL})} can be strengthened to the corresponding progression with modulus $2p^2$.
\end{problem}

We first state results for congruences of ${\rm ou}(n)$ modulo $2$ and $4$.
\begin{theorem}\label{thm1}
\begin{enumerate}[label=\textup{(\roman*)}]
	\item For $n\in \mathbb{N}$, ${\rm ou}(n)$ is odd iff $3n+1=4m^2$ for some $m\in\Z$.
	\item Let $n\in \mathbb{N}$ be odd. Then ${\rm ou}(n)\equiv0\pmod4$ iff $3n+1$ is not a perfect square.
	\item Let $n\in \mathbb{N}$ be even. If $3n+1\ne m^2_1+3m^2_2$ for any $m_1,m_2\in \mathbb{Z}$, then ${\rm ou}(n)\equiv 0\pmod{4}$. 
\end{enumerate}
\end{theorem}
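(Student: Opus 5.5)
The plan is to start from the Bringmann--Lovejoy representation of ${\rm OU}(q)$ as a mixed false theta function, i.e.\ the $\z=1$ specialisation of \cite[Theorem~1.1]{BL}. That identity writes ${\rm OU}(q)$, up to simple terms, as an infinite product $A(q)$ built from $(q;q^2)_\infty$, $(q^2;q^2)_\infty$ and similar factors, times a false theta series $F(q)=\sum \pm q^{Q(n)}$ with $Q$ quadratic. I have not re-derived the exact form, and the whole argument depends on it. The exponents should come out of the shape $(4m^2-1)/3$, after the substitution $3n+1$, since that is what produces the square condition. Everything afterwards is computation modulo $2$ and $4$ on the two factors separately.

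\emph{Parity, part (i).} Modulo $2$ the signs in $F$ disappear, so $F$ becomes a genuine (partial) theta series. For the product I use $(1-x)^2\equiv 1-x^2 \pmod 2$ and $(q;q)_\infty^2\equiv (q^2;q^2)_\infty \pmod 2$. These should turn $A(q)$ into a quotient that cancels, or reduces to a single theta series via the Jacobi triple product or Euler's pentagonal theorem. The target is ${\rm OU}(q)\equiv \sum_{m\ge1} q^{(4m^2-1)/3} \pmod 2$, which gives (i) once multiplied through by $q^{1/3}$ and the exponents are read as $3n+1=4m^2$. As a fallback, I can argue directly on \eqref{eq:gf}: using $(q;q^2)_{n+1}^{-2}\equiv (q^2;q^4)_{n+1}^{-1}\pmod 2$, the sum becomes $\sum_n q^{2n+1}/(q^2;q^4)_{n+1}$, and its parity follows from a standard $q$-series identity.

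\emph{Modulo $4$, parts (ii) and (iii).} I will write $1/(1-x)^2=(1+x)^2/(1-x^2)^2$ and reduce modulo $4$, keeping track of the terms $2x$, which only matter modulo $2$. This should give
\[
{\rm OU}(q)\equiv T_1(q)+2\,T_2(q)\pmod 4,
\]
where $T_1$ is the parity series from (i), reduced modulo $4$, and $T_2$ is a product of two theta-type series modulo $2$. Heuristically one factor comes from the false theta part and one from the product part. I expect these to be $\sum q^{m_1^2}$ and $\sum q^{3m_2^2}$ after the substitution $q\mapsto q^3$ and the shift by $1$, which produces the binary form $m_1^2+3m_2^2$. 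I then split ${\rm OU}(q)$ into its even and odd parts in $n$. For odd $n$, the congruence $m_1^2+3m_2^2=3n+1\equiv 0 \pmod 4$ forces strong conditions on $m_1,m_2$, and pairing $(m_1,m_2)$ with $(-m_1,m_2)$ and so on shows that $T_2$ contributes $0$ modulo $2$ except on squares. The odd part then collapses modulo $4$ to a single theta series, which gives the ``iff'' in (ii); for the converse direction one also needs to check that the coefficient at squares is $2\pmod 4$ when $3n+1$ is a square not of the form $4m^2$. For even $n$, I will only claim the one-directional statement (iii): if $3n+1$ has no representation as $m_1^2+3m_2^2$, then both the $T_1$ and the $T_2$ coefficients vanish. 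This is immediate, because a square is itself of the form $m_1^2+3\cdot 0^2$.

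The main obstacle is the precise mod $4$ reduction of the product part $A(q)$, i.e.\ identifying $T_2$ exactly as a product of two thetas. Cross terms of the form $2x$ in the expansion are easy to lose, and the representation-count parity argument in (ii) requires the exact form. I will first try to rewrite $A(q)$ using $(q;q^2)_\infty=(q;q)_\infty/(q^2;q^2)_\infty$ and the modulo-$4$ identity $(q;q)_\infty^4\equiv (q^2;q^2)_\infty^2\pmod 4$. If that is insufficient, I will instead dissect the false theta series $F$ along residues of $n$ modulo $3$ and verify the result numerically against the first few hundred coefficients.
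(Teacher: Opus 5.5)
Your outline has the right shape, but the steps that carry (ii) and (iii) are missing, and the mechanism you propose for (ii) does not work.

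\textbf{The reduction and the gap in (ii).} Start from \Cref{lem1}. Note that the ``simple terms'' are essential. Modulo $2$ they give $\sum_{n\in\Z}q^{3n^2+2n}$, while the product part gives $(q^8;q^8)_\infty\equiv\sum_{n\in\Z}q^{12n^2-4n}$. The exponents $(4m^2-1)/3$ only appear after the squares $k^2$ with $\gcd(k,6)=1$ cancel between the two. Modulo $4$, the relation $(q)_\infty^4\equiv(q^2;q^2)_\infty^2$ gives $\mathrm{OU}\equiv S_1+S_2S_3\pmod 4$. Here $S_1$ is the partial theta series \eqref{eqn2a}, $S_2=\sum_{m\in\Z}(-1)^mq^{m^2}$, and $S_3=(q^2;q^2)_\infty\sum_{n\ge0}(-1)^nq^{n(n+1)}$ is supported on even exponents, with $S_3\equiv(q^8;q^8)_\infty\pmod 2$. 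For odd $N$, everything then hinges on the parity of the number of pairs $(m_1,m_2)$ with $m_1>0$, $\gcd(m_1,6)=1$, $m_2\ge1$ odd and $m_1^2+3m_2^2=3N+1$.

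The theta factors here are half-series: the pentagonal factor fixes the sign of $m_1$, and the other factor runs only over $m_2\ge1$. (Full theta series would be $\equiv1\pmod 2$ and carry no information.) So your pairing $(m_1,m_2)\leftrightarrow(-m_1,m_2)$ is vacuous. Evenness away from the squares is an arithmetic fact. The paper identifies the count with $D_3\bigl(\tfrac{3N+1}{4}\bigr)$ via \Cref{lem4}, and multiplicativity shows this is odd iff $(3N+1)/4$ is a square. For example, for $N=9$ the count is $2$, from $28=5^2+3\cdot1^2=1^2+3\cdot3^2$. These two representations are related by multiplication by a sixth root of unity in $\Z\bigl[\tfrac{1+\sqrt{-3}}{2}\bigr]$, not by any sign change.

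The converse in (ii) needs no extra check. For odd $n$, a square $3n+1$ is automatically $4m^2$, so $\mathrm{ou}(n)$ is odd by (i). Your case ``square not of the form $4m^2$'' cannot occur.

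\textbf{The gap in (iii).} The decomposition ``$T_1=$ parity series, $T_2=$ product of two thetas'' is not what the reduction produces. The part not multiplied by $2$ is $S_1+S_3$. Its $S_3$-coefficient, $[q^N]S_3=\sum(-1)^{n+m}$ over $n\ge0$, $m\in\Z$ with $n(n+1)+m(3m-1)=N$, is a signed count of representations $4(3N+1)=(6m-1)^2+3(2n+1)^2$. It need not vanish modulo $4$ when $3N+1$ is not a square: for example $[q^{30}]S_3=-2$ while $3\cdot30+1=91$, and indeed $\mathrm{ou}(30)\equiv2\pmod 4$.

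So ``a square is $m_1^2+3\cdot0^2$'' does not dispose of this term. You need that $4M=a^2+3b^2$ with $a,b$ odd forces $M=x^2+3y^2$. Take $x=(a+3b)/4$, $y=(a-b)/4$ if $a\equiv b\pmod 4$, and $x=(a-3b)/4$, $y=(a+b)/4$ otherwise. Then this coefficient is an empty sum under the hypothesis of (iii).

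Likewise the $2T_2$ part requires the parity statement that $[q^k]S_3$ is odd iff $3k+1=\ell^2$ with $\gcd(\ell,6)=1$, which is immediate from $S_3\equiv(q^8;q^8)_\infty\pmod 2$. Only then does an odd term $[q^{N-4m^2}]S_3$ force $3N+1=\ell^2+3(2m)^2$, contradicting the hypothesis. Neither ingredient appears in your proposal.
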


Next, we state a classification for congruences of ${\rm ou}^*(n)$ modulo $2$ and $4$. Define, for $\varepsilon\in \{1,2\}$,
\begin{align}\label{setdef}
S_1&:=\left\{2m^2:m\in \mathbb{N}\right\},\nonumber\\
S^{[\varepsilon]}_2&:=\left\{\varepsilon p^{4k+1}m^2:\left(\frac{-2}{p}\right)=-1, k\in \mathbb{N}_0,\ p\nmid m,\ \text{and}\ m\in \mathbb{N} \right\},\quad  S_2:=S^{[1]}_2\cup S^{[2]}_2.
\end{align} 
\begin{theorem}\label{thm2}
	Let $n\ge1$ be odd. We have,
	\[
	{\rm ou}^*(n)\equiv\begin{cases}1 \pmod2&\Leftrightarrow\ 3n-1\in S_1,\\
	2\pmod4&\Leftrightarrow\ 3n-1\in S_2,\\
	0\pmod4&\Leftrightarrow\ 3n-1\notin S_1\cup S_2.\end{cases}
	\]
\end{theorem}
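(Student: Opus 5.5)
The plan is to turn the odd part of ${\rm OU}^*(q)$ into a divisor-type arithmetic function of $N:=3n-1$ modulo $4$, and then read off the three cases from its prime factorisation.

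\emph{Step 1: a Hecke-type double sum.} I would start from the identity of Bringmann and Lovejoy \cite{BL} that expresses ${\rm OU}^*(q)$ (at $\z=1$) as a Hecke-type indefinite double sum plus simple theta terms; this identity underlies the proofs of \Cref{thm:BLparity} and \Cref{thm:BL62}. I would then extract the terms with odd exponent $n$. For these, $N=3n-1\equiv 2\pmod 6$, and I expect the exponent of each surviving term to be an indefinite binary quadratic form. Completing the square in that form should turn the coefficient of $q^n$ into a signed count of representations of $N$, or $N/2$, by a binary form attached to $\mathbb{Q}(\sqrt{2})$ or $\mathbb{Q}(\sqrt{-2})$, with summation ranges cut out by linear inequalities.

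\emph{Step 2: cancellation modulo $4$ and a divisor sum.} Modulo $4$ I would use the involutions of the summation region, namely the unit action and the sign changes $x\mapsto -x$, $y\mapsto -y$. These pair terms off, so the pairs contribute multiples of $2$, and pairs of pairs contribute multiples of $4$. The aim is an identity
\[
{\rm ou}^*(n)\equiv \sum_{d\mid N'} c(d)\pmod 4,
\]
where $N'$ is $N$ or $N/2$ and $c$ is multiplicative, built from the character $\left(\frac{-2}{\cdot}\right)$. Concretely, I expect primes with $\left(\frac{-2}{p}\right)=1$ to contribute factors $\equiv 0$ or $\equiv 1\pmod 4$ according as they occur to an odd or even power (up to a harmless symmetry). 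Primes with $\left(\frac{-2}{p}\right)=-1$ should contribute $e+1$, exactly as in the ordinary divisor function. Reconciling the Hecke-sum range restrictions with a clean norm count is where I expect the real work to lie. This is the main obstacle: getting both the counting identity and its reduction modulo $4$ exactly right, including the power of $2$. I would first test any candidate identity numerically on small odd $n$ before trying to prove it.

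\emph{Step 3: reading off the cases.} Granting that multiplicative structure, the value is odd exactly when every exponent is even, which gives $N\in S_1$; this should match \Cref{thm:BLparity}. The value is $\equiv 2\pmod 4$ exactly when a single inert prime $p$ has odd exponent $e\equiv 1\pmod 4$ and all other exponents are even. This is precisely $N\in S_2^{[1]}\cup S_2^{[2]}$, the factor $\varepsilon\in\{1,2\}$ accounting for the ramified prime $2$. In every other case two factors are even, or one factor is $\equiv 0\pmod 4$, so the value is $\equiv 0\pmod 4$. This yields the three equivalences of \Cref{thm2}. As a check I would verify the identity against the progressions in \eqref{eqn1}.
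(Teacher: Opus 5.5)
\textbf{There is a genuine gap: Step~2 is the whole theorem, and it is not carried out.} You do not name the Hecke-type identity, the binary form, or the involutions, and you flag the reduction yourself as the main obstacle. Step~3 is only bookkeeping once a multiplicative description modulo $4$ is available. The parity case needs no new work: it is \Cref{thm:BLparity}, since $6n-2=2(3n-1)$.

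Moreover, the congruence you aim for is false as stated. From \eqref{eq:gf} one computes ${\rm ou}^*(1)=1$ and ${\rm ou}^*(17)=25$. Here $n=1$ and $n=17$ lie in the same residue class modulo $16$. The corresponding values $3n-1$ are $2$ and $50=2\cdot 5^2$, which differ only by the factor $5^2$, and $\left(\frac{-2}{5}\right)=-1$. Whatever convention you adopt for $N'$ and for the prime $2$, a multiplicative divisor sum with local factor $e+1$ at such primes would force ${\rm ou}^*(17)\equiv 3\cdot{\rm ou}^*(1)\equiv 3\pmod 4$. This contradicts $25\equiv 1\pmod 4$. At best such an identity holds up to sign. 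That would be harmless for the three-case classification, but it shows the identity you would test numerically has not yet been found.

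There is also reason to doubt that a single norm count delivers it. Hecke-type expansions of $\omega$ and $\nu$ typically carry an eta-quotient factor, so their coefficients are convolutions rather than bare representation numbers. In addition, the primes with $\left(\frac{-2}{p}\right)=-1$ enter the actual argument through different mechanisms in different residue classes. For example, in the class $8m+2$ of $c_\omega$ the relevant count is a sum of two squares. There $p\equiv 5\pmod 8$ appears only because $24m+5\equiv 5\pmod 8$.

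\textbf{What the paper does instead is avoid Hecke sums altogether.} For odd $n$:
\begin{itemize}
\item \Cref{lem3} and Fine's identity \eqref{thm:fine} give ${\rm ou}^*(n)\equiv[q^n]\bigl(q^3\omega(q^4)+q\,(q^8;q^8)_\infty^3/(q^4;q^4)_\infty^2\bigr)\pmod 4$.
\item Watson's identity \eqref{W1} together with the $4$-dissection of \Cref{lem:phi4a} then yields ${\rm ou}^*(n)\equiv\pm c_\omega(n)\pmod 4$.
\item The same ingredients give, modulo $4$, the relations $c_\omega(4m+1)\equiv c_1(m)$, $c_\omega(8m+2)\equiv 2c_2(m)$, $c_\omega(8m)\equiv 2c_\psi(m)$ and $c_\omega(4m-1)\equiv(-1)^m c_\omega(m)$.
\item Each residue class is settled by an existing classification: \Cref{L:CC} for $c_1$, \Cref{thm:d} for $c_\psi$, \Cref{lem:h} via $r_2$ for $c_2$, and \Cref{thm:apsy} for the classes $4$ and $6$ modulo $8$.
\item The class $n\equiv 3\pmod 4$ follows by induction from the last recursion together with $4k\in S_2\iff k\in S_2$.
\end{itemize}
To salvage your route you would need an explicit identity of comparable strength that separates $2$ from $0$ modulo $4$. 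As written, the proposal does not establish that dichotomy.
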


As a direct consequence of \Cref{thm2}, we have the following:
\begin{corollary}\label{cor1}
Let $A,B\in \mathbb{N}_0$ with $A$ even and $B$ odd. Then, for $n\in \mathbb{N}_0$, we have
\[
{\rm ou}^*\left(An+B\right)\equiv 0\pmod{4}\Leftrightarrow 3(An+B)-1\notin S_1\cup S_2.
\]
\end{corollary}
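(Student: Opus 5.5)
The plan is to deduce \Cref{cor1} directly from \Cref{thm2} by observing that every term of the progression is odd. Since $A$ is even and $B$ is odd, for every $n\in\mathbb{N}_0$ the integer $N:=An+B$ is odd. It is also positive, because $B\ge1$ and $A,n\ge0$. Hence $N$ satisfies the hypothesis of \Cref{thm2}, namely that $N\ge1$ is odd.

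Applying \Cref{thm2} to $N$, the three cases are mutually exclusive and exhaustive. Either $3N-1\in S_1$, in which case ${\rm ou}^*(N)$ is odd; or $3N-1\in S_2$, in which case ${\rm ou}^*(N)\equiv2\pmod4$; or $3N-1\notin S_1\cup S_2$, in which case ${\rm ou}^*(N)\equiv0\pmod4$.

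The two directions of the equivalence then follow from this trichotomy. The backward implication is the third case of \Cref{thm2}. For the forward implication, suppose ${\rm ou}^*(N)\equiv0\pmod4$. If $3N-1$ lay in $S_1$, then ${\rm ou}^*(N)$ would be odd, which is impossible. If $3N-1$ lay in $S_2$, then ${\rm ou}^*(N)\equiv2\pmod4$, which is also impossible. Therefore $3N-1\notin S_1\cup S_2$.

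The argument needs one consistency check: the case split in \Cref{thm2} must be a genuine partition of the residues. The value ${\rm ou}^*(N)$ is odd, $\equiv2\pmod4$, or $\equiv0\pmod4$, and these three conditions are mutually exclusive. Since each is characterized by an iff in \Cref{thm2}, the sets $S_1$, $S_2$ and the complement of $S_1\cup S_2$ must also be disjoint on values of the form $3N-1$. I do not expect any step to be an obstacle, since all the substance lies in \Cref{thm2}.
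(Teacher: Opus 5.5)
Your proposal is correct and follows the paper's route: the paper states this corollary as a direct consequence of \Cref{thm2}, which applies because $An+B$ is a positive odd integer whenever $A$ is even and $B$ is odd. One small simplification: the third case of \Cref{thm2} is already an equivalence, so the trichotomy argument for the forward direction is valid but not needed.
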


For $M\in\mathbb N$ with $\gcd(M,6)=1$, a prime $\ell\ge5$ and $j\in\mathbb N$ with
$\ell\nmid j$, set
\[
R_{M,\ell,j}:=2M^2\ell j+\frac{2M^2\ell^2+1}{3}.
\]
\begin{corollary}\label{C:GD2}
	Let $M\in\mathbb N$ with $\gcd(M,6)=1$, $\ell\ge5$ be a prime, and $j\in\mathbb{N}$ with $\ell\nmid j$. Assume that $\nu_\ell(M)$ is odd or that one of the following holds:
	\begin{enumerate}[label=\textnormal{(\arabic*)}]
		\item $\ell\equiv1,11,17,19\pmod{24}$;
		\item $\ell\equiv5,23\pmod{24}$ and $\left(\frac j\ell\right)=-1$;
		\item $\ell\equiv7,13\pmod{24}$ and $\left(\frac{3j}\ell\right)=-1$.
	\end{enumerate}
	Then, for all $n\in\mathbb{N}_0$, we have
	\[
	\mathrm{ou}^*\left(2M^2\ell^2n+R_{M,\ell,j}\right)\equiv0\pmod 4.
	\]
Otherwise, for infinitely many $n\in\mathbb{N}_0$,
\[
\mathrm{ou}^*\left(2M^2\ell^2n+R_{M,\ell,j}\right)\equiv2\pmod 4.
\]
\end{corollary}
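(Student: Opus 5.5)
The plan is to reduce everything to \Cref{cor1}, i.e.\ to the characterization in \Cref{thm2}. First check that \Cref{cor1} applies. The modulus $A=2M^2\ell^2$ is even. For $B=R_{M,\ell,j}$: since $\gcd(M\ell,6)=1$ we have $M^2\ell^2\equiv1\pmod 3$, so $\frac{2M^2\ell^2+1}{3}$ is an integer, and it is odd because $2M^2\ell^2+1$ is odd. Since $2M^2\ell j$ is even, $B$ is odd. Then compute
\[
N:=3\left(2M^2\ell^2n+R_{M,\ell,j}\right)-1=2M^2\ell\, t,\qquad t:=3\ell n+3j+\ell .
\]
The key observation is that $\ell\nmid t$, because $\ell\nmid 3j$. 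Hence $\nu_\ell(N)=2\nu_\ell(M)+1$ is odd. The whole question becomes: when is $N\in S_1\cup S_2$?

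Membership in $S_1$ is impossible. Indeed $N=2m^2$ would force $M^2\ell t$ to be a square, contradicting the odd $\ell$-adic valuation. For $S_2$, recall that $N\in S_2$ means $N=\varepsilon p^{4k+1}m^2$ with $p$ an odd prime, $\left(\frac{-2}{p}\right)=-1$, $p\nmid m$ and $\varepsilon\in\{1,2\}$. So $p$ is the unique odd prime occurring to an odd power in $N$. Since $\ell$ occurs to an odd power, we must have $p=\ell$. This requires $2\nu_\ell(M)+1\equiv1\pmod 4$, i.e.\ $\nu_\ell(M)=2k$ even, and $\left(\frac{-2}{\ell}\right)=-1$, i.e.\ $\ell\equiv5,7\pmod 8$. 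It also requires that $t$ have no other odd prime to an odd power, i.e.\ $t\in\{s^2,2s^2\}$; here $\ell\nmid s$ is automatic. Conversely, these conditions suffice. If $t=s^2$, take $\varepsilon=2$ and $m=M's$; if $t=2s^2$, take $\varepsilon=1$ and $m=2M's$, where $M=\ell^{2k}M'$. So if $\nu_\ell(M)$ is odd, or if $\ell\equiv1,3\pmod 8$, \Cref{cor1} gives the congruence modulo $4$ for every $n$.

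It remains to treat $\nu_\ell(M)$ even and $\ell\equiv5,7\pmod8$, which is the main (purely arithmetic) step. We must decide when $t\equiv 3j+\ell\pmod{3\ell}$ can equal $s^2$ or $2s^2$. Modulo $3$, $t\equiv\ell$. If $\ell\equiv1\pmod3$ (so $\ell\equiv7,13\pmod{24}$), only $t=s^2$ is possible, and modulo $\ell$ we need $s^2\equiv3j$, i.e.\ $\left(\frac{3j}{\ell}\right)=1$. If $\ell\equiv2\pmod3$ (so $\ell\equiv5,23\pmod{24}$), only $t=2s^2$ is possible, and we need $\left(\frac{6j}{\ell}\right)=1$. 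A short check with quadratic reciprocity gives $\left(\frac{6}{\ell}\right)=1$ for $\ell\equiv5,23\pmod{24}$, so the condition reduces to $\left(\frac{j}{\ell}\right)=1$. Matching residues mod $24$ shows this reproduces exactly conditions (1)--(3): whenever one of them holds (or $\nu_\ell(M)$ is odd), $N\notin S_1\cup S_2$ for all $n$, and \Cref{cor1} gives $\mathrm{ou}^*\equiv0\pmod4$.

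In the remaining cases the required square class exists modulo $3$ and modulo $\ell$. By the Chinese remainder theorem we then get a residue class of $s$ modulo $3\ell$ with $s^2$ (resp.\ $2s^2$) $\equiv3j+\ell\pmod{3\ell}$. Every sufficiently large $s$ in that class gives an integer $n=(t-3j-\ell)/(3\ell)\ge0$ with $N\in S_2$. By \Cref{thm2}, using that $2M^2\ell^2n+R_{M,\ell,j}$ is odd, $\mathrm{ou}^*\equiv2\pmod4$ for infinitely many $n$. I expect the only delicate point to be the bookkeeping of the factor $2$ (choosing $\varepsilon$ correctly) and the Legendre symbol computation for $\left(\frac{6}{\ell}\right)$.
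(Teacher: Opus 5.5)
Your proposal is correct and follows essentially the same route as the paper. You factor $3N-1=2M^2\ell(3\ell n+3j+\ell)$ and use the odd $\ell$-adic valuation to exclude $S_1$ and force $p=\ell$ in $S_2$. You then reduce to $t\in\{s^2,2s^2\}$, settle solvability modulo $3$ and modulo $\ell$ (using $\left(\frac{6}{\ell}\right)=1$ for $\ell\equiv5,23\pmod{24}$), and pick $s$ in a suitable residue class to get infinitely many $n$ with $3N-1\in S_2$.

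Your ``iff'' description of when $3N-1\in S_2$ is slightly cleaner than the paper's argument by contradiction. That argument also contains a residue slip: in the case $K=s^2$ it writes $\ell\equiv5,23$ where $\ell\equiv7,13\pmod{24}$ is meant.
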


With $M=1$, we have the following result. 
\begin{corollary}\label{C:GD1}
	Let $\ell\ge5$ be prime and $j\in\mathbb N$ with $\ell\nmid j$, and assume that one of the conditions
	\textnormal{(1)--(3)} of \Cref{C:GD2} holds. Then, for $n\in\mathbb N_0$, we have
	\[
	\mathrm{ou}^*\left(2\ell^2n+R_{1,\ell,j}\right)\equiv0\pmod 4.
	\]
	Otherwise, for infinitely many $n\in\mathbb{N}_0$,
	\[
	\mathrm{ou}^*\left(2\ell^2n+R_{1,\ell,j}\right)\equiv2\pmod 4.
	\]
\end{corollary}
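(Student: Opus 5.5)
\Cref{C:GD1} is the special case $M=1$ of \Cref{C:GD2}, so I would prove it by specializing that corollary and checking that its hypotheses reduce to those stated here. First, $M=1$ satisfies $\gcd(M,6)=1$. The modulus becomes $2M^2\ell^2=2\ell^2$, and the residue $R_{M,\ell,j}$ becomes $R_{1,\ell,j}=2\ell j+\frac{2\ell^2+1}{3}$, as in the statement. Second, $\nu_\ell(1)=0$ is even. Hence the alternative ``$\nu_\ell(M)$ is odd'' in \Cref{C:GD2} never applies, and the hypothesis of \Cref{C:GD2} is exactly that one of the conditions (1)--(3) holds.

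With this, the first assertion of \Cref{C:GD2} gives $\mathrm{ou}^*(2\ell^2n+R_{1,\ell,j})\equiv0\pmod4$ for all $n\in\mathbb N_0$ whenever one of (1)--(3) holds. If none of (1)--(3) holds, we are in the ``otherwise'' branch of \Cref{C:GD2}, since the $\nu_\ell$ alternative is excluded. That branch gives $\mathrm{ou}^*(2\ell^2n+R_{1,\ell,j})\equiv2\pmod4$ for infinitely many $n$. This yields both parts of the statement.

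As a sanity check of the mechanism, which would also give a direct proof via \Cref{cor1} and \Cref{thm2}, the argument $N=2\ell^2n+R_{1,\ell,j}$ is odd. Indeed, $\ell^2\equiv1\pmod{24}$ gives $\frac{2\ell^2+1}{3}\equiv1\pmod{16}$, so this term is odd, while $2\ell j$ and $2\ell^2n$ are even. Moreover,
\[
3N-1=2\ell\,(3\ell n+3j+\ell),
\]
and $\ell\nmid 3\ell n+3j+\ell$ because $\ell\nmid 3j$. So $3N-1$ has $\nu_\ell(3N-1)=1$, which is odd, and therefore $3N-1\notin S_1$. Membership in $S_2$ then has to be decided from the residue class of $\ell$ modulo $24$, through $\left(\frac{-2}{\ell}\right)$, and from $\left(\frac{j}{\ell}\right)$ or $\left(\frac{3j}{\ell}\right)$. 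That analysis is the real content of \Cref{C:GD2}, and I would not redo it here. There is no genuine obstacle at this level: the only point needing care is noticing that $\nu_\ell(1)=0$ removes the first alternative of \Cref{C:GD2}.
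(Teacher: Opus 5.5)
Your proposal is correct and matches the paper, which obtains this corollary simply by setting $M=1$ in \Cref{C:GD2}; your observation that $\nu_\ell(1)=0$ is even, so the dichotomy reduces to whether one of (1)--(3) holds, is exactly the check required. Your side computation $3N-1=2\ell(3\ell n+3j+\ell)$ with $\nu_\ell(3N-1)=1$ also agrees with the paper's decomposition $3N-1=2\ell^{2\nu_\ell(M)+1}B^2K$ at $M=1$.
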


\begin{remark}
	\Cref{C:GD1} settles \Cref{prob:BL}.
\end{remark}

The remainder of the paper is organized as follows. In \Cref{sec1}, we recall well known $q$-series identities and congruences related to modular forms and mock theta functions. In \Cref{sec2}, we prove \Cref{thm1}. \Cref{sec3} presents proofs of \Cref{thm2}, \Cref{C:GD2}, and \Cref{C:GD1}. Finally, we conclude this paper with problems for future research in \Cref{sec4}.

\section*{Acknowledgements}
The first author has received funding from the European Research Council (ERC) under the European Unions Horizon 2020 research and innovation programme (grant agreement No. 101001179). The second author was supported by Austrian Science Fund (FWF): P 34142.

\section{Preliminaries}\label{sec1}

First, from \cite[(4.1)]{BL}, we have the following representation of ${\rm OU}(q)$ as a linear combination of mixed false theta function.
\begin{lemma}\label{lem1}
We have,
\[
{\rm OU}(q)=\sum_{n\ge 0}(-1)^{n+1}q^{3n^2+2n}\left(1+q^{2n+1}\right)+\frac{1}{\left(q;q^2\right)^2_{\infty}}\sum_{n\ge 0}(-1)^n q^{n(n+1)}.
\]
\end{lemma}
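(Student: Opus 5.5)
Our plan is to derive the identity from a two-variable generalization, namely the generating function with the peak parameter. Let
\[
F(x):=\sum_{n\ge0}\frac{x^{n}q^{2n+1}}{(q;q^2)_{n+1}^2},
\]
so that ${\rm OU}(q)=F(1)$.

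First, expand the two factors $1/(q;q^2)_{n+1}$ by the $q$-binomial theorem. They count the parts of the two sides, which are odd and at most $2n+1$. We then reorganize the sum by a standard Fine-type functional equation. The cleanest route is Fine's function
\[
F(a,b;t)=\sum_{n\ge0}\frac{(aq)_n}{(bq)_n}t^n,
\]
in base $q^2$. Write
\[
\frac{1}{(q;q^2)_{n+1}^2}=\frac{1}{(q;q^2)_\infty^2}\,(q^{2n+3};q^2)_\infty^2 .
\]
Then ${\rm OU}(q)$ becomes $\frac{1}{(q;q^2)_\infty^2}$ times a sum of the shape
\[
\sum_{n\ge0} q^{2n+1}(q^{2n+3};q^2)_\infty^2 .
\]
Expand one of the products $(q^{2n+3};q^2)_\infty$ by Euler's identity
\[
(z;q^2)_\infty=\sum_{k\ge0}\frac{(-1)^kq^{k^2-k}z^k}{(q^2;q^2)_k}
\]
with $z=q^{2n+3}$. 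Then sum the geometric series in $n$, which telescopes against the other product. Rather than doing this by hand, we would take the shortcut of invoking a known Heine/Fine transformation. An example is Andrews' identity
\[
\sum_{n\ge0}\frac{t^n}{(a;q)_{n+1}(b;q)_{n+1}}
\]
for a sum with two denominators, rewritten as a false-theta series plus a product times a partial theta series.

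Concretely, we would use the Rogers--Fine identity in base $q^2$ with $a=b=q$ and $t=q$. This should express $\sum_n q^{2n+1}/(q;q^2)_{n+1}^2$ as a bilateral-type series
\[
\sum_n \frac{(\cdots)_n}{(\cdots)_n}(1-\alpha q^{4n+\dots})\,q^{3n^2+\dots}.
\]
Next, split this into a piece that converges to a pure theta-type sum and a remainder. The expected outcome is the two pieces of the statement. The first piece,
\[
\sum_{n\ge0}(-1)^{n+1}q^{3n^2+2n}\bigl(1+q^{2n+1}\bigr)=-\sum_{n}(-1)^n q^{3n^2+2n}+\cdots,
\]
is a false theta function with exponents $3n^2+2n$ and $3n^2+4n+1$. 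The second piece is $\frac{1}{(q;q^2)_\infty^2}\sum_{n\ge0}(-1)^nq^{n^2+n}$. The latter arises by applying Euler's identity to $(q^2;q^2)$-type factors and recognizing the partial theta series $\sum(-1)^nq^{n(n+1)}$.

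As an independent safeguard, we would verify the identity by checking that both sides satisfy the same functional equation in an auxiliary variable. Replace $q^{2n+1}$ by $x^n q^{2n+1}$, derive a first-order $q$-difference equation in $x$ for both sides, and compare initial coefficients. This uniquely determines the series.

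The main obstacle is the bookkeeping in the Rogers--Fine step: getting the exact quadratic exponents $3n^2+2n$ and the boundary term $1+q^{2n+1}$, and isolating the finite false theta part from the mixed part. As a final sanity check we would compare coefficients up to $q^{10}$ against the combinatorial count of odd unimodal sequences. For example, ${\rm ou}(1)=1$, ${\rm ou}(2)=2$ (namely $1{+}1$ with either part as the peak), and ${\rm ou}(3)=4$. Since the statement is quoted from \cite[(4.1)]{BL}, citing that source is also a legitimate proof.
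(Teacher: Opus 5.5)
The paper gives no proof of this lemma; it simply quotes it from \cite[(4.1)]{BL}. Your closing sentence is therefore exactly what the paper does, and as a citation it is fine. The derivation you sketch before it, however, does not work as written, and its central step is only hoped for.

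The Rogers--Fine identity transforms $\sum_{n\ge0}\frac{(\alpha;Q)_n}{(\beta;Q)_n}\tau^n$. Its term ratio $\tau(1-\alpha Q^n)/(1-\beta Q^n)$ has at most a simple pole in $Q^n$. Your summand has term ratio $q^2/(1-q^{2n+3})^2$, which has a double pole, so no choice of parameters fits. With your choice $\alpha=\beta$ the series just collapses to $\sum_n q^n$. The two-denominator ``Andrews identity'' you mention is the right kind of input, but you only write its left-hand side.

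The Euler step fails in a similar way. If both factors are rewritten as $(q^{2n+3};q^2)_\infty/(q;q^2)_\infty$, the $n$-sum still contains an $n$-dependent infinite product and is not geometric. If only one factor is rewritten, as in the classical proof of $\sum_{n\ge0}q^n/(q)_n^2=(q)_\infty^{-2}\sum_{n\ge0}(-1)^nq^{n(n+1)/2}$, the inner sum is $\sum_{n\ge0}q^{(2n+1)(k+1)}/(q;q^2)_{n+1}$. This is not an Euler sum, because its denominator is $(q;q^2)_{n+1}$ rather than $(q^2;q^2)_n$. Carrying it through with the $q$-difference equation of that inner sum gives
\[
{\rm OU}(q)=\frac{1}{(q;q^2)_\infty^2}\sum_{k\ge0}(-1)^kq^{k^2+k}-\frac{1}{(q;q^2)_\infty}\sum_{k\ge0}(-1)^kq^{k^2+k}\sum_{m=0}^{k}\frac{q^m}{(q^2;q^2)_m}.
\]
So the lemma is equivalent to the partial theta identity
\[
\sum_{k\ge0}(-1)^kq^{k^2+k}\sum_{m=0}^{k}\frac{q^m}{(q^2;q^2)_m}=(q;q^2)_\infty\sum_{n\ge0}(-1)^nq^{3n^2+2n}\left(1+q^{2n+1}\right).
\]
This identity is where the false theta term comes from, and it is the entire content of the lemma. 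Nothing in your plan proves it.

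Your proposed safeguard does not close the gap either. The series $F(x)$ naturally satisfies the second-order equation $F(x)-2qF(q^2x)+q^2F(q^4x)=q+q^2xF(x)$, not a first-order one. You also never say which $x$-deformation of the right-hand side it should be compared with, so the uniqueness argument has nothing to act on. The deformation that does work is in the rank variable. Replace $(q;q^2)_{n+1}^2$ by $(aq;q^2)_{n+1}(q/a;q^2)_{n+1}$; the lemma is then the case $a=1$ of the Ramanujan-type partial theta identity
\[
\sum_{n\ge0}\frac{q^{2n+1}}{(aq;q^2)_{n+1}(q/a;q^2)_{n+1}}=\sum_{n\ge0}(-1)^{n+1}a^{3n+1}q^{3n^2+2n}\left(1+aq^{2n+1}\right)+\frac{1}{(aq;q^2)_\infty(q/a;q^2)_\infty}\sum_{n\ge0}(-1)^na^{2n+1}q^{n^2+n}.
\]
Checking coefficients up to $q^{10}$ is a sanity check, not a proof. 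So either rest on \cite{BL}, as the paper does, or actually prove one of the two partial theta identities displayed above.
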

By \cite[Theorem 1.60]{Ono} and \cite[(1.3.1)]{Andrews}, we have the following identities: 
\begin{lemma}\label{lem2}
We have,
\[
\sum_{n\ge 0}q^{(2n+1)^2}=q\frac{\left(q^{16};q^{16}\right)^2_{\infty}}{\left(q^8;q^8\right)_{\infty}},\qquad \sum_{n\in \mathbb{Z}}(-1)^n q^{n^2}=\frac{(q)^2_{\infty}}{\left(q^2;q^2\right)_{\infty}},\quad (q)_{\infty}=\sum_{n\in \mathbb{Z}}(-1)^n q^{\frac{n(3n-1)}{2}}.
\]
\end{lemma}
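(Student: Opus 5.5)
All three identities are classical consequences of the Jacobi triple product identity
\[
\sum_{n\in\mathbb{Z}} z^n q^{n^2}=\prod_{m\ge1}\left(1-q^{2m}\right)\left(1+zq^{2m-1}\right)\left(1+z^{-1}q^{2m-1}\right),
\]
valid for $|q|<1$ and $z\neq0$. I would prove each identity by a suitable specialization followed by routine product manipulations. The manipulations use only Euler's identity $(-q;q)_\infty=(q^2;q^2)_\infty/(q;q)_\infty$ and the splitting $(q;q)_\infty=(q;q^2)_\infty(q^2;q^2)_\infty$.

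\emph{Second identity.} Put $z=-1$. The right side becomes $(q^2;q^2)_\infty(q;q^2)_\infty^2$. Substituting $(q;q^2)_\infty=(q)_\infty/(q^2;q^2)_\infty$ gives $(q)_\infty^2/(q^2;q^2)_\infty$, which is the claim.

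\emph{Third identity (Euler's pentagonal number theorem).} Replace $q$ by $q^{3/2}$ and take $z=-q^{-1/2}$. The left side becomes $\sum_{n}(-1)^nq^{(3n^2-n)/2}$. The product becomes
\[
\prod_{m\ge1}\left(1-q^{3m}\right)\left(1-q^{3m-2}\right)\left(1-q^{3m-1}\right)=(q)_\infty,
\]
since the three factors together run over all residues mod $3$.

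\emph{First identity.} Replace $q$ by $q^{1/2}$ and take $z=q^{1/2}$. The left side is $\sum_{n\in\mathbb{Z}}q^{n(n+1)/2}=2\sum_{n\ge0}q^{n(n+1)/2}$, because $n\mapsto -1-n$ pairs the terms. The product is
\[
\prod_{m\ge1}(1-q^m)(1+q^m)(1+q^{m-1})=2(q)_\infty(-q;q)_\infty^2=2\,\frac{(q^2;q^2)_\infty^2}{(q)_\infty},
\]
where the factor $2$ comes from $m=1$ in the last factor. Hence $\sum_{n\ge0}q^{n(n+1)/2}=(q^2;q^2)_\infty^2/(q)_\infty$. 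Finally, replace $q$ by $q^8$ and multiply by $q$, using $(2n+1)^2=8\cdot\frac{n(n+1)}2+1$.

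The only point needing care is this last identity. There one must track the extra factor $2$ from the $m=1$ term and the symmetry of the triangular-number sum. Beyond that there is no genuine obstacle. Alternatively one may simply cite \cite[Theorem 1.60]{Ono} and \cite[(1.3.1)]{Andrews}, as the paper does.
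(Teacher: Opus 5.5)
Your proposal is correct: each specialization of the Jacobi triple product checks out, including the factor $2$ from the $m=1$ term and the pairing $n\mapsto -1-n$ in the triangular-number sum. The paper gives no argument of its own and simply cites Ono's Theorem 1.60 and Andrews' (1.3.1), so your derivation is the standard proof behind those citations and amounts to the same approach.
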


By \cite[Theorem 3.7.7]{Berndt}, we obtain the following twisted divisor sum representation of the eta quotient $\frac{(q^2;q^2)^2_{\infty}}{(q)_{\infty}}$. Define
\[
D_3(n):=\sum_{d\mid n}\chi_{-3}\left(d\right),
\]
where $\chi_{k}(d):=(\frac{k}{d})$ denotes the {\it Kronecker symbol}.
\begin{lemma}\label{lem4}
We have,
\[
\frac{\left(q^2;q^2\right)^2_{\infty}\left(q^6;q^6\right)^2_{\infty}}{(q)_{\infty}\left(q^3;q^3\right)_{\infty}}=\sum_{n\ge 0}D_3\left(2n+1\right)q^n.
\]
\end{lemma}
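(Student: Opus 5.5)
The plan is to reduce the identity to counting representations by the binary quadratic form $x^2+3y^2$. First I rewrite the eta quotient as a product of theta series. The standard product formula (a consequence of the Jacobi triple product, in the same spirit as \Cref{lem2}) is
\[
\psi(q):=\sum_{n\ge0}q^{\frac{n(n+1)}2}=\frac{(q^2;q^2)_\infty^2}{(q)_\infty}.
\]
Hence the left-hand side equals $\psi(q)\psi(q^3)$, which is
\[
\sum_{a,b\ge0}q^{\frac{a(a+1)}2+\frac{3b(b+1)}2}.
\]
Multiplying each exponent by $8$ and adding $4$ gives $8n+4=(2a+1)^2+3(2b+1)^2$. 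So the coefficient of $q^n$ is the number of pairs of positive odd integers $(x,y)$ with $x^2+3y^2=4m$, where $m:=2n+1$. Allowing signs, this equals one quarter of the number of solutions $(x,y)\in\Z^2$ of $x^2+3y^2=4m$ with $x,y$ both odd.

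Next, write $r(N)$ for the number of solutions of $x^2+3y^2=N$ in $\Z^2$. Since $m$ is odd, any solution of $x^2+3y^2=4m$ has $x\equiv y\pmod 2$. The solutions with both $x,y$ even correspond bijectively to solutions of $x^2+3y^2=m$. Therefore the number of odd--odd solutions is $r(4m)-r(m)$, and it remains to show
\[
r(4m)-r(m)=4D_3(m).
\]

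For this I invoke the classical formula (Dirichlet/Lorenz, e.g.\ in \cite{Berndt}):
\[
r(N)=2\sum_{d\mid N}\chi_{-3}(d)+4\sum_{\substack{d\mid N\\ d\equiv4\ (12)}}1-4\sum_{\substack{d\mid N\\ d\equiv8\ (12)}}1.
\]
For odd $m$ the last two sums vanish, so $r(m)=2D_3(m)$.

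For $N=4m$, I use $\chi_{-3}(2)=-1$ and $\chi_{-3}(4)=1$. Then multiplicativity gives
\[
\sum_{d\mid 4m}\chi_{-3}(d)=(1-1+1)D_3(m)=D_3(m).
\]
Divisors of $4m$ that are $\equiv 0\pmod 4$ have the form $4d$ with $d\mid m$. Since $4d\equiv 4\pmod{12}$ iff $d\equiv1\pmod3$, and $4d\equiv 8\pmod{12}$ iff $d\equiv2\pmod3$, the difference of the last two counts is again $D_3(m)$. Thus $r(4m)=6D_3(m)$, and $r(4m)-r(m)=4D_3(m)$, as needed. As a sanity check, $m=1$ gives the six solutions $(\pm2,0),(\pm1,\pm1)$ and $r(1)=2$, consistent with $D_3(1)=1$.

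The main obstacle is the representation formula for $x^2+3y^2$. If one wants it self-contained rather than cited, I would derive it from the Eisenstein integers. The form $a^2+ab+b^2$ has $6D_3(N)$ representations, since $\Z[\omega]$ is a PID with six units. For $N=4m$ with $m$ odd, the map $(x,y)\mapsto(a,b)=(x-y,2y)$ from $x^2+3y^2=4m$ to $a^2+ab+b^2=m$ is a bijection onto solutions with $b$ even, up to a factor $2$ from $4m$ versus $m$. Equivalently, $\mathbb{Z}[\sqrt{-3}]$ has index $2$ in $\Z[\omega]$, and among associates exactly the appropriate proportion lie in the suborder. Tracking this index-$2$ bookkeeping carefully is the step most likely to cause errors.
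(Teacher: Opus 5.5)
Your proof is correct, but it takes a different route from the paper, which gives no derivation at all: it imports the identity ready-made from Berndt's book (Theorem 3.7.7 there).

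You instead derive it from the more basic Lorenz formula for the number $r(N)$ of representations by $x^2+3y^2$:
\begin{enumerate}
\item factor the quotient as a product of two triangular-number theta series;
\item turn the coefficient of $q^n$ into a count of positive odd solutions of $x^2+3y^2=4m$ with $m=2n+1$;
\item remove the even--even solutions as $r(m)$;
\item evaluate $r(4m)-r(m)=6D_3(m)-2D_3(m)$.
\end{enumerate}
Every step checks out. This includes $\sum_{d\mid 4}\chi_{-3}(d)=1$, and the identification of the excess of divisors of $4m$ that are $\equiv4\pmod{12}$ over those $\equiv8\pmod{12}$ with $D_3(m)$.

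The citation is shorter. Your reduction has the advantage of exhibiting exactly the lattice-point interpretation the paper needs later in \eqref{eqn7}, where $|X_3(k)|$, a count of positive odd solutions of $m^2+3n^2=k$, is read off from this lemma.

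Two minor points.

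First, the letters $\psi$ and $\omega$ already denote third order mock theta functions in this paper. Rename your triangular-number series and your cube root of unity.

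Second, in your optional self-contained sketch, the substitution $(a,b)=(x-y,2y)$ satisfies $a^2+ab+b^2=x^2+3y^2$, so it lands on $4m$, not $m$. The clean version is $(a,b)=(\frac{x-y}{2},y)$, with inverse $(x,y)=(2a+b,b)$. Since $x^2+3y^2\equiv0\pmod 4$ forces $x\equiv y\pmod 2$, this is a bijection from the solutions of $x^2+3y^2=4m$ onto those of $a^2+ab+b^2=m$. It gives $r(4m)=6D_3(m)$ directly, for every $m$.

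For odd $m$, the companion count $r(m)=2D_3(m)$ follows from a short argument. The six units of the Eisenstein integers map onto the three nonzero residues modulo the inert prime $2$, with kernel $\{\pm1\}$. Hence exactly two of the six associates of an element of odd norm lie in the suborder $\mathbb{Z}[\sqrt{-3}]$, which consists of the Eisenstein integers congruent to a rational integer modulo $2$. This is the precise form of the index bookkeeping you were worried about.
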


Next, we recall Ramanujan's third order mock theta functions (see \cite{AB5}).
\[
f(q):=\sum_{n\ge0}\frac{q^{n^2}}{(-q)_n^2},\quad \omega(q):=\sum_{n\ge0}\frac{q^{2n(n+1)}}{\left(q;q^2\right)_{n+1}^2},\quad \nu(q):=\sum_{n\ge0}(q;q^2)_n(-q)^n,\quad \psi(q)=\sum_{n\ge1}\frac{q^{n^2}}{\left(q;q^2\right)_n}.
\]
Due to Watson \cite{W} and Fine \cite[(26.88)]{Fine}, we have the following identities for mock theta functions.
\begin{lemma}\label{thm:watson}
We have,
	\begin{align}
	f(q^8)+2q\omega(q)+2q^3\omega\left(-q^4\right)&=\frac{\left(q^2;q^2\right)_\infty\left(q^4;q^4\right)_\infty^6}{(q)_\infty^2\left(q^8;q^8\right)_\infty^4},\label{W1}\\
	f(q)+4\psi(-q)&=\frac{(q)_\infty^3}{\left(q^2;q^2\right)_\infty^2},\label{W2}\\
	\nu(-q)&=q\omega\left(q^2\right)+\frac{\left(q^4;q^4\right)_\infty^3}{\left(q^2;q^2\right)_\infty^2}.\label{thm:fine}
	\end{align}
\end{lemma}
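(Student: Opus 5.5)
These three identities are classical, so the proof will mostly be bookkeeping: \eqref{W1} and \eqref{W2} are Watson's relations among the third order mock theta functions \cite{W}, and \eqref{thm:fine} is \cite[(26.88)]{Fine}. If a self-contained argument is wanted, I would follow one uniform route: write every mock theta function as a Hecke-type (Appell--Lerch) sum, and reduce each identity to a theta-function identity. Concretely, I would start from the standard representations
\[
(q)_\infty f(q)=2\sum_{n\in\Z}\frac{(-1)^nq^{\frac{n(3n+1)}2}}{1+q^n},\qquad
(q^2;q^2)_\infty\,\omega(q)=\sum_{n\in\Z}\frac{(-1)^nq^{3n^2+3n}}{1-q^{2n+1}},
\]
together with the analogous formulas for $\psi(q)$ and $\nu(q)$. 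Each of these follows from Watson's transformation (equivalently, a limiting case of the Rogers--Fine identity) applied to the defining $q$-hypergeometric series. In the notation of Hickerson--Mortenson, every function then becomes a theta quotient times some $m(x,q,z)$.

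\textbf{Identity \eqref{thm:fine}.} This is the easiest, and I would prove it directly at the level of $q$-series. Write $\nu(-q)=\sum_{n\ge0}(-q;q^2)_nq^n$. A Fine-type transformation, i.e.\ iterating $(-q;q^2)_{n+1}=(-q;q^2)_n(1+q^{2n+1})$ and telescoping, separates out a false-theta-free piece. The remaining series is identified with $q\omega(q^2)$ through the Appell--Lerch form above with $q\mapsto q^2$. The leftover theta part should come out as $\sum_{n\ge0}q^{n(n+1)}\cdot(q^4;q^4)_\infty/(q^2;q^2)_\infty$. By Gauss,
\[
\sum_{n\ge0}q^{n(n+1)}=\frac{(q^4;q^4)_\infty^2}{(q^2;q^2)_\infty},
\]
which gives exactly the stated product $\frac{(q^4;q^4)_\infty^3}{(q^2;q^2)_\infty^2}$.

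\textbf{Identities \eqref{W2} and \eqref{W1}.} Here I would combine the Appell--Lerch sums so that the mock parts cancel. For \eqref{W2}, $f(q)$ and $\psi(-q)$ are both expressed via $m(x,q^3,z)$ with the same $x$ but different $z$. The basic property
\[
m(x,q,z_1)-m(x,q,z_0)=\text{explicit theta quotient}
\]
then collapses $f(q)+4\psi(-q)$ to a pure theta quotient. For \eqref{W1}, I would use the $2$-dissection of $f$ together with $q\mapsto q^8$. The three terms $f(q^8)$, $2q\omega(q)$ and $2q^3\omega(-q^4)$ are rewritten as $m$-functions with a common modulus, and the same $z$-shift property is applied twice.

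\textbf{Main obstacle.} The hardest step is identifying the resulting theta quotients with the specific eta quotients on the right-hand sides, especially in \eqref{W1}. I would first try the Jacobi triple product and the quintuple product identities to simplify. If that becomes messy, the fallback is to note that both sides are holomorphic modular forms, of weight $1/2$ for \eqref{W2} and \eqref{thm:fine} and weight $1$ for \eqref{W1}, on some $\Gamma_0(N)$ with $N\mid 64$. One then checks finitely many coefficients up to the Sturm bound.
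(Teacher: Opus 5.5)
Your primary justification, citing Watson for \eqref{W1} and \eqref{W2} and Fine's (26.88) for \eqref{thm:fine}, is exactly what the paper does. The paper offers nothing beyond these citations, so as a proof of the lemma that suffices. The self-contained route you add, however, would not work as written if you meant it to replace the citations.

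The decisive problem is the fallback. You propose to note that ``both sides are holomorphic modular forms'' and check coefficients up to a Sturm bound. But the left-hand sides are combinations of mock theta functions, none of which is modular. That this particular combination is modular is precisely what each identity asserts, so the argument is circular. To make a finite check legitimate, you would first have to pass to Zwegers' non-holomorphic completions of $f$, $\omega$, $\psi$, $\nu$. You would then need to show that their non-holomorphic parts, which are integrals of weight $3/2$ unary theta functions, cancel in each combination.

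The bookkeeping in that fallback is also off:
\begin{itemize}
\item \emph{Weight.} The eta quotient in \eqref{W1} has $1+6-2-4=1$ net eta factor, so all three identities have weight $1/2$, not $1$.
\item \emph{Level.} The natural normalizations are $q^{-1/24}f(q)$ and $q^{2/3}\omega(q)$, so \eqref{W1}, \eqref{W2}, \eqref{thm:fine} must be multiplied by $q^{-1/3}$, $q^{-1/24}$, $q^{1/3}$ respectively. After rescaling $q$ to get integral exponents, a factor $3$ enters the level, so $N\mid 64$ is wrong unless you work with multiplier systems.
\item \emph{Holomorphy.} In these normalizations the right-hand sides of \eqref{W1} and \eqref{W2} even have poles at $\infty$. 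For example, $q^{-1/24}(q)_\infty^3/(q^2;q^2)_\infty^2=\eta(\tau)^3/\eta(2\tau)^2$.
\end{itemize}

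Within the Appell--Lerch route itself, there is a further gap in your plan for \eqref{W1}. The sums for $f(q^8)$ and $\omega(-q^4)$ have modulus $q^{24}$, while the sum for $\omega(q)$ has modulus $q^{6}$. The $z$-shift property only compares sums with the same $x$ and the same modulus, so it cannot combine these three terms. What is needed is a modulus-changing identity, obtained by splitting the summation index by parity, applied to the sum for $\omega(q)$; a $2$-dissection of $f$ is not the relevant step.

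Finally, in \eqref{thm:fine} you read the theta part off the answer (``should come out as'') rather than deriving it. The Gauss-identity step is correct. The actual content, the transformation showing that the odd part of $\nu(-q)$ is $q\omega(q^2)$, is left unspecified.
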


Moreover, we require the following $4$-dissection from \cite[Lemma 3.1]{APSY}.
\begin{lemma}\label{lem:phi4a}
	We have,
	\[
	\frac{\left(q^2;q^2\right)_\infty\left(q^4;q^4\right)_\infty^6}{(q)_\infty^2\left(q^8;q^8\right)_\infty^4}=\sum_{j=0}^3 q^jF_j\left(q^4\right),
	\]
	where
	\begin{align*}
	F_0(q)&:=\frac{\left(q^2;q^2\right)_\infty^{15}}{(q)_\infty^8\left(q^4;q^4\right)_\infty^6},\quad F_1(q):=2\frac{\left(q^2;q^2\right)_\infty^{9}}{(q)_\infty^6\left(q^4;q^4\right)_\infty^2},\quad
	F_2(q):=4\frac{\left(q^2;q^2\right)_\infty^{3}\left(q^4;q^4\right)_\infty^2}{(q)_\infty^4},\\
	F_3(q)&:=8\,\frac{\left(q^4;q^4\right)_\infty^{6}}{(q)_\infty^2\left(q^2;q^2\right)_\infty^3}.
	\end{align*}
\end{lemma}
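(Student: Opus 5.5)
The plan is to rewrite the left-hand side so that all of its $q$-dependence off the lattice $4\Z$ sits in two theta functions with known $2$-dissections. Write
\[
\varphi(q):=\sum_{n\in\Z}q^{n^2}=\frac{(q^2;q^2)_\infty^5}{(q)_\infty^2(q^4;q^4)_\infty^2},\qquad
\psi(q):=\sum_{n\ge0}q^{\frac{n(n+1)}2}=\frac{(q^2;q^2)_\infty^2}{(q)_\infty}.
\]
By \Cref{lem2}, $\varphi(-q)=(q)_\infty^2/(q^2;q^2)_\infty$.

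I would use the following standard facts:
\begin{align*}
\varphi(q)&=\varphi(q^4)+2q\psi(q^8), & \varphi(q)^2&=\varphi(q^2)^2+4q\psi(q^4)^2,\\
\varphi(q)\varphi(-q)&=\varphi(-q^2)^2, & \psi(q)^2&=\varphi(q)\psi(q^2).
\end{align*}
The first two come from splitting the summation index by parity (or from sums of two squares). The last two are immediate checks of eta quotients using the product formulas above.

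\textbf{Step 1 (isolating the non-$q^4$ part).} Substitute $1/(q)_\infty^2=\varphi(q)(q^4;q^4)_\infty^2/(q^2;q^2)_\infty^5$. The left-hand side becomes
\[
\varphi(q)\,\frac{(q^4;q^4)_\infty^8}{(q^2;q^2)_\infty^4(q^8;q^8)_\infty^4}
=\varphi(q)\,\frac{(q^4;q^4)_\infty^6}{(q^8;q^8)_\infty^4\,\varphi(-q^2)^2}.
\]
Next, by $\varphi(q)\varphi(-q)=\varphi(-q^2)^2$ with $q\mapsto q^2$, we have $1/\varphi(-q^2)^2=\varphi(q^2)^2/\varphi(-q^4)^4$. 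Hence
\[
\text{LHS}=\varphi(q)\,\varphi(q^2)^2\,G(q^4),\qquad G(q):=\frac{(q;q)_\infty^6}{(q^2;q^2)_\infty^4\varphi(-q)^4}.
\]
Here $G$ is a function of $q^4$ only after the substitution, so only $\varphi(q)\varphi(q^2)^2$ needs to be dissected.

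\textbf{Step 2 (dissection).} Expand $\varphi(q)=\varphi(q^4)+2q\psi(q^8)$ and
\[
\varphi(q^2)^2=\varphi(q^8)^2+4q^4\psi(q^{16})^2+4q^2\varphi(q^8)\psi(q^{16}).
\]
Multiplying out, the $q^{4m+j}$ components are:
\begin{itemize}
\item $j=0$: $\varphi(q^4)\bigl(\varphi(q^8)^2+4q^4\psi(q^{16})^2\bigr)=\varphi(q^4)^3$, using the second identity with $q\mapsto q^4$;
\item $j=1$: $2\psi(q^8)\varphi(q^4)^2$;
\item $j=2$: $4\varphi(q^4)\varphi(q^8)\psi(q^{16})=4\varphi(q^4)\psi(q^8)^2$, using the fourth identity;
\item $j=3$: $8\psi(q^8)\varphi(q^8)\psi(q^{16})=8\psi(q^8)^3$.
\end{itemize}

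\textbf{Step 3 (matching).} Replace $q^4$ by $q$ and multiply each component by $G(q)$. With the eta-quotient forms of $\varphi$, $\psi$ and $\varphi(-q)$, it remains to simplify $\varphi(q)^3G(q)$, $2\psi(q^2)\varphi(q)^2G(q)$, $4\varphi(q)\psi(q^2)^2G(q)$ and $8\psi(q^2)^3G(q)$. These should reduce to $F_0,\dots,F_3$ respectively; this is pure exponent bookkeeping.

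\textbf{Main obstacle.} The main obstacle is just keeping the exponents straight in Step 3 and in the initial rewriting. It is worth checking one case fully by hand. For example, for $j=3$: $\psi(q^2)^3=(q^4;q^4)_\infty^6/(q^2;q^2)_\infty^3$ and $G(q)=(q^2;q^2)_\infty^4\cdot(q^2;q^2)_\infty^{-4}(q)_\infty^{-2}\cdots$. Such a check confirms the normalisation of $G$, and in particular the constants $1,2,4,8$ coming from the powers of $2$ in the dissection.
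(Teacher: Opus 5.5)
Your proof is correct and complete. The paper does not prove this lemma; it only cites [APSY, Lemma 3.1]. So there is no in-paper argument to compare against, and your write-up is a self-contained derivation. It uses the classical dissections $\varphi(q)=\varphi(q^4)+2q\psi(q^8)$ and $\varphi(q)^2=\varphi(q^2)^2+4q\psi(q^4)^2$, which are exactly the eta-quotient identities the authors draw from APSY.

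I checked each part and everything goes through:
\begin{itemize}
\item the four auxiliary identities;
\item the rewriting in Step 1;
\item the four components in Step 2;
\item the final matching with $F_0,\dots,F_3$.
\end{itemize}

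The bookkeeping you flag as the main obstacle disappears once you evaluate $G$. We have $G(q)=(q)_\infty^{6}(q^2;q^2)_\infty^{-4}\cdot(q^2;q^2)_\infty^{4}(q)_\infty^{-8}=1/(q)_\infty^2$. So the left-hand side is simply $\varphi(q)\varphi(q^2)^2/(q^4;q^4)_\infty^2$, which is the classical form of the right side of Watson's identity. This can be checked directly from the product formulas, skipping the detour through $\varphi(-q^2)$ and $\varphi(-q^4)$.

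In Step 2 you can also apply $\varphi(q^2)^2=\varphi(q^4)^2+4q^2\psi(q^8)^2$ directly, rather than squaring the dissection of $\varphi(q^2)$ and recombining.

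The whole lemma then reads
\[
\sum_{j=0}^{3}q^jF_j(q^4)=\frac{\bigl(\varphi(q^4)+2q\psi(q^8)\bigr)\bigl(\varphi(q^4)^2+4q^2\psi(q^8)^2\bigr)}{(q^4;q^4)_\infty^2}.
\]
Here $F_0,F_1,F_2,F_3$ are $\varphi(q)^3$, $2\varphi(q)^2\psi(q^2)$, $4\varphi(q)\psi(q^2)^2$ and $8\psi(q^2)^3$, each divided by $(q)_\infty^2$. Each of these matches the stated eta quotient after a one-line exponent count.
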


%Now, we record two identities regarding eta quotients from \cite[(5)--(8)]{APSY}.
%\begin{lemma}\label{lem:phi4}
%	We have,
%	\begin{align}
%	\frac{\left(q^2;q^2\right)_\infty^5}{(q)_\infty^2\left(q^4;q^4\right)_\infty^2}&=\frac{\left(q^8;q^8\right%)_\infty^5}{\left(q^4;q^4\right)_\infty^2\left(q^{16};q^{16}\right)_\infty^2}
%	+2q\frac{\left(q^{16};q^{16}\right)_\infty^2}{\left(q^8;q^8\right)_\infty},\label{E1}\\
%	\frac{\left(q^2;q^2\right)_\infty^{10}}{(q)_\infty^4\left(q^4;q^4\right)_\infty^4}&=\frac{\left(q^4;q^4\right)_\infty^{10}}{\left(q^2;q^2\right)_\infty^4\left(q^8;q^8\right)_\infty^4}
%	+4q\frac{\left(q^8;q^8\right)_\infty^4}{\left(q^4;q^4\right)_\infty^2}.\label{E2}
%	\end{align}
%\end{lemma}

Next, we require an elementary fact from \cite[Lemma 6.3]{BL}.
\begin{lemma}\label{lem3}
For $n\in \mathbb{N}$, we have that modulo $4$,
\[
\left(-q;q^2\right)^2_n-\left(-q^2;q^4\right)_n
\]
is an odd polynomial.
\end{lemma}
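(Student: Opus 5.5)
The plan is to work directly modulo $4$ with polynomials in $\Z[q]$, using the fact that squaring doubles all exponents modulo $2$. Write
\[
(-q;q^2)_n^2=\prod_{j=0}^{n-1}\left(1+q^{2j+1}\right)^2=\prod_{j=0}^{n-1}\left(1+q^{4j+2}+2q^{2j+1}\right).
\]
Expanding the product, we sort the terms by how many factors $2q^{2j+1}$ they contain. Terms with two or more such factors carry a factor $4$ and vanish modulo $4$. The terms with no such factor give $\prod_{j}(1+q^{4j+2})=(-q^2;q^4)_n$.

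The terms with exactly one such factor give
\[
2\sum_{i=0}^{n-1}q^{2i+1}\prod_{j\ne i}\left(1+q^{4j+2}\right).
\]
Because of the factor $2$, we only need to know the parity of the coefficients of $q^{2i+1}\prod_{j\ne i}(1+q^{4j+2})$. Every monomial in $\prod_{j\ne i}(1+q^{4j+2})$ has exponent equal to a sum of numbers $\equiv 2 \pmod 4$, so it is even. Multiplying by $q^{2i+1}$ therefore makes every exponent odd. Hence this part is $2$ times a polynomial with only odd powers of $q$, which is an odd polynomial.

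Combining the three parts, $(-q;q^2)_n^2-(-q^2;q^4)_n\equiv 2\sum_i q^{2i+1}\prod_{j\ne i}(1+q^{4j+2}) \pmod 4$, and the right-hand side is an odd polynomial, as claimed. An alternative route is induction on $n$, using $(1+q^{2n+1})^2\equiv 1+q^{4n+2}+2q^{2n+1}$ and the fact that $2\times(\text{even polynomial})\times(\text{odd})$ stays odd modulo $4$. There is no real obstacle; the only care needed is to note that only the parity of the single-cross-term coefficients matters, and that parity is preserved by the reduction above.
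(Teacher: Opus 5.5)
Your proof is correct and complete. Sorting the terms of $\prod_{j}\left(1+q^{4j+2}+2q^{2j+1}\right)$ by the number of factors $2q^{2j+1}$ is exactly the right bookkeeping, and it gives
\[
\left(-q;q^2\right)_n^2-\left(-q^2;q^4\right)_n\equiv 2\sum_{i=0}^{n-1}q^{2i+1}\prod_{j\neq i}\left(1+q^{4j+2}\right)\pmod 4 .
\]

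There is nothing in the paper to compare this with. The paper does not prove this lemma; it imports it from Bringmann--Lovejoy \cite[Lemma 6.3]{BL}. Your argument is therefore a self-contained substitute. It is also slightly more informative than the bare statement, because it identifies the residue explicitly. Multiplied by $q^{2n+1}$, it yields exactly the form used in the proof of \Cref{assistlem2}: $q^{2n+1}\bigl((-q;q^2)_n^2-(-q^2;q^4)_n\bigr)$ lies in $\mathbb{Z}[q^2]$ modulo $4$.

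Two small remarks. First, the step ``only the parity matters'' is superfluous. The polynomial $q^{2i+1}\prod_{j\neq i}(1+q^{4j+2})$ is already an odd polynomial over $\mathbb{Z}$, so reduction modulo $4$ is needed only to discard terms with two or more cross factors.

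Second, in the inductive variant you sketch, the factor multiplying $2q^{2n+1}$ is $(-q;q^2)_n^2$. This is not an even polynomial over $\mathbb{Z}$; it is even only modulo $2$, since $(-q;q^2)_n^2\equiv(-q^2;q^4)_n\pmod 2$. That congruence is what makes $2q^{2n+1}(-q;q^2)_n^2$ odd modulo $4$, and you would need to cite it explicitly there.
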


For $n\in \mathbb{N}$, let $r_2(n)$ denote the number of representations of $n$ by the quadratic form $n^2_1+n^2_2$. From \cite[Theorem 3.2.1]{Berndt}, we obtain the following lemma.
\begin{lemma}\label{lem5}
We have, for $n\in \mathbb{N}$,
\[
r_2(n)=4\sum_{d\mid n}\chi_{-4}(d).
\]
\end{lemma}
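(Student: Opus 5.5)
The plan is to prove \Cref{lem5} arithmetically in the Gaussian integers $\mathbb{Z}[i]$, by comparing two multiplicative functions prime power by prime power. First, $r_2(n)$ counts the elements $\alpha=n_1+n_2 i\in\mathbb{Z}[i]$ with norm $N(\alpha)=n_1^2+n_2^2=n$. Since $\mathbb{Z}[i]$ is a Euclidean domain, hence a PID, every nonzero ideal is principal. Each ideal is generated by exactly $4$ associates, coming from the unit group $\{\pm1,\pm i\}$. Hence
\[
r_2(n)=4\,a(n),\qquad a(n):=\#\{\text{ideals } I\subseteq \mathbb{Z}[i] : N(I)=n\}.
\]
It therefore suffices to show $a(n)=\sum_{d\mid n}\chi_{-4}(d)$.

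Both sides are multiplicative in $n$. For the left side this follows from unique factorization of ideals and multiplicativity of the norm: an ideal of norm $mn$ with $\gcd(m,n)=1$ factors uniquely as a product of ideals of norms $m$ and $n$. For the right side, it is the Dirichlet convolution $\mathbf{1}*\chi_{-4}$ of two multiplicative functions. So I only need to check equality at prime powers $p^a$. This uses the standard splitting of rational primes in $\mathbb{Z}[i]$:
\begin{itemize}
\item $2=-i(1+i)^2$ ramifies;
\item $p\equiv1\pmod4$ splits as $p=\pi\bar\pi$ with $(\pi)\neq(\bar\pi)$;
\item $p\equiv3\pmod4$ stays inert, with norm $p^2$.
\end{itemize}

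Counting ideals of norm $p^a$ case by case:
\begin{itemize}
\item For $p=2$ there is exactly one, $(1+i)^a$. Correspondingly, $\sum_{d\mid 2^a}\chi_{-4}(d)=\chi_{-4}(1)=1$, since every even $d$ contributes $0$.
\item For $p\equiv1\pmod4$ the ideals are $(\pi)^j(\bar\pi)^{a-j}$ for $0\le j\le a$, giving $a+1$ of them. This matches $\sum_{j=0}^a\chi_{-4}(p)^j=a+1$.
\item For $p\equiv3\pmod4$ the only candidate is $(p)^{a/2}$, so there is $1$ ideal if $a$ is even and none if $a$ is odd. This matches $\sum_{j=0}^a(-1)^j$.
\end{itemize}
Multiplying over the prime factorization of $n$ gives $a(n)=\sum_{d\mid n}\chi_{-4}(d)$, and the lemma follows.

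The only substantive input is the splitting of primes $p\equiv1\pmod4$, which is Fermat's two-squares theorem. I would prove it in the usual way. Since $-1$ is a quadratic residue mod $p$ (take $x=((p-1)/2)!$ by Wilson's theorem), we get $p\mid x^2+1=(x+i)(x-i)$. But $p$ divides neither factor in $\mathbb{Z}[i]$, so $p$ is not prime in $\mathbb{Z}[i]$, and hence has a factor $\pi$ of norm $p$. Moreover $(\pi)\neq(\bar\pi)$, since otherwise $\pi$ would divide $\pi+\bar\pi$ and $\pi\bar\pi=p$, both rational integers, which forces a ramified prime over $2$; this is impossible for odd $p$.

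For inertness when $p\equiv3\pmod4$: sums of two squares are $\not\equiv3\pmod4$, so no element has norm $p$. This is the step I expect to need the most care; the rest is bookkeeping.
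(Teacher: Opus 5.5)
Your argument is correct. It differs from the paper's treatment, which gives no proof at all: the lemma is quoted from Berndt's Theorem 3.2.1. There, as far as I recall, Jacobi's two-square theorem is derived by $q$-series methods, via a Lambert series expansion of $\varphi(q)^2=\bigl(\sum_{n\in\mathbb{Z}}q^{n^2}\bigr)^2$ followed by comparison of coefficients.

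You instead count ideals of norm $n$ in $\mathbb{Z}[i]$ and match local factors prime by prime. This is in effect the factorization $\zeta_{\mathbb{Q}(i)}(s)=\zeta(s)L(s,\chi_{-4})$. Your route is self-contained and structural. It produces exactly the decomposition $\sum_{d\mid n}\chi_{-4}(d)=\prod_{\ell\mid n}\sum_{j=0}^{\nu_\ell(n)}\chi_{-4}(\ell)^j$, together with the case analysis of the local factors, which the paper later redoes by hand in \eqref{eqn14}--\eqref{eqn15} in the proof of \Cref{lem:h}. The $q$-series route keeps the lemma in the same theta-function language as the rest of the paper. Compare \Cref{lem4}, quoted from the same source: its form $x^2+3y^2$ is the norm form of the non-maximal order $\mathbb{Z}[\sqrt{-3}]$, so it would need extra care in your framework.

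Two minor points:
\begin{enumerate}
\item Your justification of $(\pi)\neq(\bar\pi)$ is compressed. The clean version: $\pi=a+bi\mid\bar\pi$ gives $\pi\mid\pi+\bar\pi=2a$ and $\pi\mid\pi-\bar\pi=2bi$. Since $\gcd(a,b)=1$ (because $a^2+b^2=p$), this forces $\pi\mid 2$, hence $p\mid 4$, which is absurd.
\item The inert case you single out as delicate is actually the easiest step. Any nontrivial factorization of $p$ in $\mathbb{Z}[i]$ would yield an element of norm $p\equiv 3\pmod 4$, which is impossible for a sum of two squares.
\end{enumerate}
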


Define, for $n\in \mathbb{N}_0$, $c_\psi(n):=[q^n](\psi(q))$. Due to Wang \cite[Theorem~4.2]{Wa}, parity of the Fourier coefficients of $\psi(q)$ reads as follows:
\begin{lemma}\label{thm:d}
	Let $n\in \mathbb{N}$. We have
	\[
	c_{\psi}(n)\ \text{is odd}\Leftrightarrow 24n-1=p^{4k+1}m^2\ \text{with}\ k\in \mathbb{N}_0\ \text{and for some prime}\ p,\ p\nmid m,\ m\in \mathbb{Z}.
	\]
\end{lemma}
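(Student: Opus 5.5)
We would follow Wang's strategy: convert $\psi(q)$ modulo $2$ into a Hecke-type indefinite theta series attached to the real quadratic field $\mathbb{Q}(\sqrt{6})$, so that $c_\psi(n)$ becomes half of a multiplicative divisor-type function of $24n-1$. Parity then reduces to a question about $2$-adic valuations of such products.

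\emph{Step 1 (reduce modulo $2$).} Start from Fine's representation
\[
\psi(q)=\sum_{n\ge0}q^{n+1}\left(-q^2;q^2\right)_n .
\]
Modulo $2$, $\left(-q^2;q^2\right)_n\equiv\left(q^2;q^2\right)_n$. Hence $\psi(q)\equiv\sum_{n\ge0}q^{n+1}(q^2;q^2)_n\pmod 2$. The latter sum has the same shape as Andrews--Dyson--Hickerson's $\sigma$-type series (in $q^2$, with the extra factor $q^{n+1}$). We would check the identity directly at the level of Hecke-type double sums. Concretely, we aim to show that
\[
q^{-1}\psi\left(q^{24}\right)\equiv\sum_{(x,y)}q^{x^2-6y^2}\pmod 2,
\]
where $(x,y)$ runs over a fundamental domain for the unit group action on elements of $\mathbb{Z}[\sqrt6]$ with $x^2-6y^2>0$, $x^2-6y^2\equiv -1\pmod{24}$, counted up to sign. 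Any sign factors such as $(-1)^{y}$ in the Hecke-type sum are irrelevant modulo $2$. This converts the problem into counting ideals.

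\emph{Step 2 (count ideals).} Let $N:=24n-1$. Because the class number of $\mathbb{Q}(\sqrt6)$ is $1$, the number of such representations modulo units is
\[
\prod_{p\ \text{split}}(e_p+1)\cdot\prod_{p\ \text{inert}}\mathbf{1}_{2\mid e_p}, \qquad e_p:=\nu_p(N),
\]
up to a uniform factor accounting for the pairing $\pm$ and conjugation. The pairing is what makes $c_\psi(n)$ equal to \emph{half} of this product. The conditions $N\equiv-1\pmod{24}$ and the fundamental-domain restriction must pin down this factor exactly, and the ramified primes $2,3$ do not divide $N$.

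\emph{Step 3 (parity).} Let $R(N)$ denote the product in Step 2, so that $c_\psi(n)\equiv R(N)/2\pmod 2$. Then $c_\psi(n)$ is odd iff $R(N)\equiv2\pmod4$. This forces three things: every inert exponent is even; exactly one split prime $p$ has $e_p+1\equiv2\pmod 4$, i.e.\ $e_p\equiv1\pmod4$; and all other split exponents are even. Equivalently, $N=p^{4k+1}m^2$ with $p\nmid m$.

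Conversely, suppose $N=p^{4k+1}m^2$. Since $N\equiv-1\pmod{24}$ is not a square, $p$ must be represented, so $p$ is split or ramified; ramified is excluded, so $p$ is split. Then $R(N)\equiv 2\pmod 4$, giving the stated criterion.

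\emph{Main obstacle.} The main obstacle is Step 1: establishing the exact indefinite theta representation, with the correct fundamental domain and the correct factor $\tfrac12$. We would attack it by using Bailey pairs to write $\sum_{n\ge0}q^{n+1}(-q^2;q^2)_n$ as $\sum_{n\ge0}\sum_{|j|\le n}(\pm)\,q^{Q(n,j)}$ for an indefinite quadratic form $Q$, in the manner of Andrews' treatment of $\sigma(q)$. We would then complete the square to the form $x^2-6y^2$ after substituting $q\mapsto q^{24}$ and multiplying by $q^{-1}$.
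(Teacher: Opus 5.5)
The paper does not prove this lemma; it quotes Wang's Theorem~4.2. Your route is the natural one: reduce modulo $2$ to an indefinite theta series for $\mathbb{Q}(\sqrt6)$, count ideals of norm $N=24n-1$, and read off parity from $R(N)\equiv 2\pmod 4$. Your Fine-type identity and its reduction modulo $2$ are correct, and so is your Step~3 analysis of when $R(N)\equiv 2\pmod 4$.

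\textbf{The gap is Step~1, which carries all the weight.} You do not prove it, and as written it is false. The condition $x^2-6y^2\equiv-1\pmod{24}$ forces $x^2\equiv 2\pmod 3$, so your index set is empty. The relevant elements have \emph{negative} norm, $x^2-6y^2=1-24n$, so the exponent should be $6y^2-x^2$.

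You also do not need Bailey pairs. Your series $\sum_{n\ge0}q^{n+1}(q^2;q^2)_n$ is exactly $-\frac12\sigma^*(q)$. Even more directly, $\psi(q)\equiv\frac12\sigma^*(q)=\sum_{n\ge1}\frac{(-1)^nq^{n^2}}{(q;q^2)_n}\pmod 2$ follows from the definitions. Andrews, Dyson and Hickerson proved $q^{-1}\sigma^*(q^{24})=\sum_{n\ge1}T(1-24n)q^{24n-1}$. Here $T(m)$ is the number of solutions of $u^2-6v^2=m$, taken modulo multiplication by $\pm(5+2\sqrt6)^r$, with $u+3v\equiv\pm1\pmod{12}$, minus the number with $u+3v\equiv\pm5\pmod{12}$. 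That is precisely the Hecke-type identity you were trying to construct, and you should cite it.

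\textbf{The factor $\frac12$ is asserted, not derived.} Saying ``sign factors are irrelevant modulo $2$'' is not enough. You need $R(N)/2$ modulo $2$, which means information modulo $4$ before halving, and there signs do matter ($1+1\not\equiv 1-1\pmod 4$). Three facts make it work:
\begin{enumerate}
\item[(i)] $\mathbb{Q}(\sqrt6)$ has class number $1$, and norm $+N$ is impossible modulo $3$. So every ideal of norm $N$ has a generator of norm $-N$, and the number of inequivalent solutions is $R(N)=\sum_{d\mid N}\left(\frac6d\right)$.
\item[(ii)] Reducing $u^2-6v^2=1-24n$ modulo $8$ forces $v$ even. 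Hence $u-3v\equiv u+3v\pmod{12}$, and conjugate solutions receive the same sign.
\item[(iii)] $N\equiv-1\pmod{24}$ is not a square, so no ideal of norm $N$ is self-conjugate.
\end{enumerate}
Together these give $T(1-24n)=2\sum_{\{I,\bar I\}}(\pm1)$, summed over conjugate pairs, and therefore $c_\psi(n)\equiv\frac12T(1-24n)\equiv\frac12R(N)\pmod 2$.

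\textbf{The converse needs a different justification.} The sentence ``since $N$ is not a square, $p$ must be represented'' does not establish anything. The correct reason $p$ splits is that $\gcd(m,6)=1$ gives $p\equiv p^{4k+1}m^2=N\equiv-1\pmod{24}$, whence $\left(\frac6p\right)=1$.
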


Define, for $n\in \mathbb{N}_0$, $c_1(n):=[q^n](\frac{(q^2;q^2)_\infty^3}{(q)_\infty^2})$. Moreover, we require the following classification of $c_1(n)$ modulo $4$ due to Chen and Chen
\cite[Theorems 3.6 and 4.2]{CC}.
\begin{lemma}\label{L:CC}
	Let $n\in\mathbb N_0$. We have,
		 \[c_1(n)\equiv2\pmod 4\Leftrightarrow 6n+1\in S_2^{[1]}.\]
\end{lemma}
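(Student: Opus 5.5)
Since this lemma is quoted from Chen and Chen, the plan below is a self-contained route through the tools of \Cref{sec1}. The idea is to reduce $(q^2;q^2)_\infty^3/(q)_\infty^2$ modulo $4$ to a theta series attached to a ternary quadratic form. Then symmetry pairing cuts the count modulo $4$ down to a few degenerate contributions.

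\textbf{Reduction to a theta series.} Since $(1-x)^4\equiv(1-x^2)^2\pmod 4$, we have $(q)_\infty^4\equiv(q^2;q^2)_\infty^2\pmod 4$. Hence
\[
\frac{(q^2;q^2)_\infty^3}{(q)_\infty^2}=\frac{(q^2;q^2)_\infty^3(q)_\infty^2}{(q)_\infty^4}\equiv (q^2;q^2)_\infty (q)_\infty^2=\frac{(q)_\infty^2}{(q^2;q^2)_\infty}\,(q^2;q^2)_\infty^2 \pmod 4 .
\]
By \Cref{lem2} this equals $\sum_{c\in\Z}(-1)^cq^{c^2}$ times the square of the pentagonal series in $q^2$. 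Substituting $6\cdot a(3a-1)=\tfrac{(6a-1)^2-1}{2}$ and clearing denominators should give
\[
c_1(n)\equiv \sum_{\substack{X^2+Y^2+12c^2=12n+2\\ X,Y\equiv 5\ (\mathrm{mod}\ 6)}} \pm 1 \pmod 4,
\]
where the sign $(-1)^{a+b+c}$ is unchanged by $X\leftrightarrow Y$ and by $c\mapsto -c$.

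\textbf{Pairing modulo $4$.} Solutions with $X\neq Y$ and $c\neq0$ fall into orbits of size $4$ with a common sign, so they vanish modulo $4$. What remains has two parts.
\begin{enumerate}
\item The $c=0$ solutions, i.e.\ $X^2+Y^2=12n+2$ with $X\ne Y$. These come in swapped pairs, so they contribute $2$ times the parity of the number of such unordered pairs. I would read this parity off from \Cref{lem5}: $r_2(12n+2)=4\sum_{d\mid 6n+1}\chi_{-4}(d)$, and the congruence conditions on $X,Y$ select a fixed fraction of the representations.
\item The $X=Y$ solutions, i.e.\ $6n+1=X^2+6c^2$. The terms with $c\neq 0$ pair up under $c\mapsto -c$ and contribute $2$ times the parity of the number of representations with $c>0$. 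The single term with $c=0$, $X=Y$ occurs only when $6n+1$ is a square.
\end{enumerate}
Combining these, $c_1(n)$ is odd exactly when $6n+1$ is a square, which is consistent with the classical parity result. When $6n+1$ is not a square, $c_1(n)/2$ is congruent modulo $2$ to
\[
\#\{\text{unordered } c=0 \text{ pairs}\}+\#\{(X,c): 6n+1=X^2+6c^2,\ c>0\}.
\]

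\textbf{Arithmetic step.} The main obstacle is computing this parity. Both counts are divisor sums: one for $\Q(i)$, the other for the forms of discriminant $-24$. Their parities are governed by $\prod(e_p+1)$ over the relevant split primes. I would first try to combine the two into one multiplicative function of $6n+1$ whose value modulo $2$ is odd iff
\begin{enumerate}
\item exactly one prime $p$ with $(\tfrac{-2}{p})=-1$ (one of $p\equiv 5,7\pmod 8$) occurs to exponent $\equiv1\pmod 4$, and
\item all other primes occur to even exponent.
\end{enumerate}
That is exactly the condition $6n+1\in S_2^{[1]}$. The subtle point is handling the split primes and the exponent-$\bmod 4$ behaviour, where the two counts must cancel or reinforce correctly. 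If this direct route becomes unwieldy, I would fall back on citing \cite[Theorems 3.6 and 4.2]{CC}, as the paper does.
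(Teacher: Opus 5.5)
Your reduction is correct, and it takes a genuinely different route: the paper gives no argument for this lemma and only imports it from Chen and Chen, whereas you derive it directly.

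\textbf{What checks out.} I verified each step.
\begin{itemize}
\item Since $(q)_\infty^4\equiv(q^2;q^2)_\infty^2\pmod 4$, we get $c_1(n)\equiv[q^n](q)_\infty^2(q^2;q^2)_\infty$.
\item The substitution $X=6a-1$, $Y=6b-1$ gives $X^2+Y^2+12c^2=12n+2$ with sign $(-1)^{a+b+c}$.
\item The Klein-four pairing leaves exactly the terms you list.
\end{itemize}
Write $N:=6n+1$ and $\Sigma_k(N):=\sum_{d\mid N}\chi_k(d)$. Every solution of $X^2+Y^2=2N$ has $\gcd(XY,6)=1$, so exactly a quarter of the $r_2(2N)$ representations have $X,Y\equiv5\pmod 6$. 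Hence for $N$ not a square your $c=0$ count is $\Sigma_{-4}(N)/2$.

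In the $X=Y$ count you must keep the condition $X\equiv5\pmod 6$ (equivalently $X>0$). Your final display drops it, and without it that count is always even. With it, the count is $r_{x^2+6y^2}(N)/4$. To evaluate this you need one input that is not among the paper's preliminaries: $r_{x^2+6y^2}(N)=2\Sigma_{-24}(N)$ for $N\equiv1\pmod 3$. This is genus theory: discriminant $-24$ has two classes, one per genus, and $2x^2+3y^2$ never represents $N\equiv1\pmod 3$. Your argument then gives, for $N$ not a square,
\[
c_1(n)\equiv\Sigma_{-4}(N)+\Sigma_{-24}(N)\pmod 4 .
\]
I checked this numerically for $n\le 10$.

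\textbf{Closing the arithmetic step.} The step you left open does close, but not through a single multiplicative function, since a sum of two multiplicative functions is not multiplicative. Argue term by term modulo $4$ instead.
\begin{itemize}
\item For non-square $N$ coprime to $6$, each $\Sigma_k(N)$ is even.
\item $\Sigma_k(N)\equiv2\pmod 4$ if and only if $N=p^{4a+1}m^2$ with $p\nmid m$ and $\chi_k(p)=1$.
\item If $N$ is not of this shape, both sums are $\equiv0\pmod 4$. If it is, both conditions concern the same $p$. So $c_1(n)\equiv2\pmod 4$ if and only if exactly one of $\chi_{-4}(p)$, $\chi_{-24}(p)$ equals $1$.
\end{itemize}
The idea your sketch is missing is the mod-$3$ constraint. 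Since $N\equiv p\pmod 3$ and $N\equiv1\pmod3$, we get $p\equiv1\pmod 3$, so $\chi_{-3}(p)=1$. Then $\chi_{-24}(p)=\chi_{-4}(p)\chi_{-8}(p)\chi_{-3}(p)=\chi_{-4}(p)\chi_{-8}(p)$. Hence exactly one of the two characters equals $1$ if and only if $\left(\frac{-2}{p}\right)=-1$, that is, if and only if $N\in S_2^{[1]}$.

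This step is essential, not cosmetic. For $p\equiv5,11,17,23\pmod{24}$ the two conditions genuinely disagree, and these primes are excluded only because $N\equiv1\pmod 3$.

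\textbf{Comparison.} The paper's citation is shorter. Your route is elementary, parallels the paper's own proof of \Cref{lem:h}, and yields the explicit congruence above. That congruence explains both the parity result and where the condition $\left(\frac{-2}{p}\right)=-1$ comes from.
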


Finally, we require the congruences for the Fourier coefficients of $q\omega(q)$ from \cite[Theorem 3.4]{APSY}. Define, for $n\in \mathbb{N}_0$, $c_\omega(n):=[q^n](q\omega(q))$.
\begin{lemma}\label{thm:apsy}
	For $n\in \mathbb{N}_0$, we have
	\[
	c_\omega\left(8n+r\right)\equiv 0\pmod{4}\ \ \text{with}\ r\in \{4,6\}.
	\]
\end{lemma}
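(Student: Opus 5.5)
The plan is to read off the coefficients of $q\omega(q)$ in the residue classes $4$ and $6$ modulo $8$ directly from Watson's identity \eqref{W1}, combined with the $4$-dissection of \Cref{lem:phi4a}. Substituting \Cref{lem:phi4a} into \eqref{W1} gives
\[
f(q^8)+2q\omega(q)+2q^3\omega\left(-q^4\right)=\sum_{j=0}^3 q^jF_j\left(q^4\right).
\]
I compare the terms whose exponents are $\equiv 0$ or $2\pmod 4$. The series $f(q^8)$ only has exponents $\equiv 0\pmod 8$. The term $2q^3\omega(-q^4)$ only has exponents $\equiv 3\pmod 4$. So the only term on the left that contributes in the classes $4\pmod 8$ and $2\pmod 4$ is $2q\omega(q)$.

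\emph{The class $r=6$.} Comparing coefficients of $q^{4m+2}$ gives
\[
2c_\omega(4m+2)=[q^m]F_2(q)=4\,[q^m]\frac{(q^2;q^2)_\infty^3(q^4;q^4)_\infty^2}{(q)_\infty^4},
\]
so $c_\omega(4m+2)=2[q^m]G(q)$ with $G(q):=\frac{(q^2;q^2)_\infty^3(q^4;q^4)_\infty^2}{(q)_\infty^4}$. It therefore suffices to show that $[q^m]G$ is even for odd $m$, since then $4m+2\equiv 6\pmod 8$. Modulo $2$ we have $(q)_\infty^4\equiv(q^4;q^4)_\infty$, hence $G(q)\equiv (q^2;q^2)_\infty^3(q^4;q^4)_\infty\pmod 2$. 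This is a series in $q^2$, so all its odd-indexed coefficients vanish modulo $2$.

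\emph{The class $r=4$.} Comparing coefficients of $q^{4m}$ with $m$ odd, so that $f(q^8)$ does not contribute, gives
\[
2c_\omega(4m)=[q^m]F_0(q).
\]
Hence it suffices to show that $[q^m]F_0\equiv 0\pmod 8$ for odd $m$. For this I use the standard fact that $A\equiv B\pmod{2^k}$ implies $A^2\equiv B^2\pmod{2^{k+1}}$. Starting from $(q)_\infty^2\equiv(q^2;q^2)_\infty\pmod 2$ and squaring twice yields $(q)_\infty^8\equiv(q^2;q^2)_\infty^4\pmod 8$. Since $(q)_\infty^8$ has constant term $1$, this congruence may be inverted in $\Z[[q]]$. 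Therefore
\[
F_0(q)\equiv\frac{(q^2;q^2)_\infty^{11}}{(q^4;q^4)_\infty^6}\pmod 8,
\]
which is again a power series in $q^2$. Its odd-indexed coefficients are therefore $\equiv0\pmod 8$, and so $c_\omega(8n+4)\equiv0\pmod 4$.

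\emph{Main obstacle.} The only delicate point is the $r=4$ case, where one needs $F_0$ modulo $8$ rather than modulo $2$. The squaring lemma for $2$-power congruences handles this cleanly. One must also check that $f(q^8)$ and $2q^3\omega(-q^4)$ really do not contribute to the relevant exponent classes, which is immediate from the exponent parities noted above.
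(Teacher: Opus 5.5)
Your proof is correct. The paper gives no proof of this lemma and simply cites \cite[Theorem 3.4]{APSY}, so your argument is a genuinely different route: a self-contained derivation from \eqref{W1} and \Cref{lem:phi4a}.

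Those are the same two computations the paper itself carries out later, in the proof of \Cref{assistlem0}:
\begin{itemize}
\item \emph{The class $r=6$.} There the paper writes $\sum_{n\ge0}c_\omega(4n+2)q^n\equiv 2(q^2;q^2)_\infty(q^8;q^8)_\infty\pmod 4$. Your reduction $G\equiv(q^2;q^2)_\infty^3(q^4;q^4)_\infty\pmod 2$ is the same series modulo $2$.
\item \emph{The class $r=4$.} There the paper uses $F_0(q)\equiv(q^2;q^2)_\infty^{11}/(q^4;q^4)_\infty^{6}\pmod 8$. This series is even in $q$, which is exactly what kills the odd-indexed coefficients of $F_0$ in your argument.
\end{itemize}

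The individual steps all check out:
\begin{itemize}
\item You correctly track which terms of \eqref{W1} can reach exponents $\equiv 4\pmod 8$ and $\equiv 2\pmod 4$. Only $2q\omega(q)$ does, since $f(q^8)$ lives on multiples of $8$ and $2q^3\omega(-q^4)$ on exponents $\equiv 3\pmod 4$.
\item The squaring step $(q)_\infty^8\equiv(q^2;q^2)_\infty^4\pmod 8$ is valid.
\item Passing to inverses of units in $\mathbb{Z}[[q]]$ is legitimate.
\item A quick check agrees: $2c_\omega(4)=8=[q^1]F_0$ and $2c_\omega(6)=16=[q^1]F_2$.
\end{itemize}

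Your version removes the dependence on the external Theorem 3.4. The paper then needs only Watson's identity and the dissection lemma it already imports. The citation route is shorter on the page but leaves this lemma as a black box.
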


\section{Proof of \Cref{thm1}}\label{sec2}

\begin{proof}[Proof of \Cref{thm1}]
By \Cref{lem1}, we have
\begin{align*}
\sum_{n\ge 0}{\rm ou}(n)q^n&\equiv \sum_{n\ge 0}q^{3n^2+2n}\left(1+q^{2n+1}\right)+\frac{\left(q^2;q^2\right)^2_{\infty}}{\left(q\right)^2_{\infty}}\sum_{n\ge 0} q^{n(n+1)}\\
&\equiv \sum_{n\in \mathbb{Z}}q^{3n^2+2n}+\left(q^8;q^8\right)_{\infty}\equiv \sum_{n\in \mathbb{Z}}q^{3n^2+2n}+\sum_{n\in \mathbb{Z}}q^{12n^2-4n}\pmod{2}.
\end{align*}
using \Cref{lem2}, $(q)^2_{\infty}\equiv (q^2;q^2)_{\infty}\pmod{2}$, and $(q^4;q^4)^2_{\infty}\equiv (q^8;q^8)_{\infty}\pmod{2}$ in the second step, \Cref{lem2} in the final step. Letting $q\mapsto q^3$ and multiplying both sides by $q$, it follows that,
\begin{align*}
\sum_{n\ge 0}{\rm ou}(n)q^{3n+1}&\equiv \sum_{n\in \mathbb{Z}} q^{(3n+1)^2}+\sum_{n\in \mathbb{Z}} q^{(6n-1)^2}\equiv \sum_{\substack{n\ge 1\\n\ \text{even}\\3\nmid n}}q^{n^2}\pmod{2}.
\end{align*} 
This concludes the proof of \Cref{thm1} (i).

Next, by \Cref{lem1}, we have
\begin{align}\label{eqn2}
&\sum_{n\ge 0}{\rm ou}(n)q^n=\sum_{n\ge 0}(-1)^{n+1}q^{3n^2+2n}\left(1+q^{2n+1}\right)+\frac{\left(q^2;q^2\right)^2_{\infty}}{\left(q\right)^2_{\infty}}\sum_{n\ge 0} (-1)^nq^{n(n+1)}\nonumber\\
&\hspace{0.5 cm}\equiv \sum_{n\ge 0}(-1)^{n+1}q^{3n^2+2n}\left(1+q^{2n+1}\right)+(q)^2_{\infty}\sum_{n\ge 0} (-1)^nq^{n(n+1)}\nonumber\\
&\hspace{0.5 cm}\equiv \sum_{n\ge 0}(-1)^{n+1}q^{3n^2+2n}\left(1+q^{2n+1}\right)+\frac{(q)^2_{\infty}}{\left(q^2;q^2\right)_{\infty}}\sum_{\substack{n\ge 0\\m\in \mathbb{Z}}} (-1)^{m+n} q^{n(n+1)+m\left(3m-1\right)}\pmod{4},
\end{align}
using $(q)^4_{\infty}\equiv (q^2;q^2)^2_{\infty}\pmod{4}$ in the second step and \Cref{lem2} in the final step. For $N\in \mathbb{N}_0$ and\footnote{Here and throughout, $R[[q]]$ denotes the ring of formal power series in $q$ over a commutative ring $R$.} $A(q):=\sum_{n\ge 0}a(n)q^n\in \mathbb{Z}[[q]]$, $[q^N](A(q)):=a(N)$. We set 
\begin{equation}\label{eqn2a}
S_1(q):=\sum_{n\ge 0}(-1)^{n+1}q^{3n^2+2n}\left(1+q^{2n+1}\right)=:\sum_{n\ge 0}s_1(n)q^n
\end{equation}
and note that $s_1(n)\in \{-1,0,1\}$ for $n\in \mathbb{N}_0$. Moreover, since 
\begin{equation}\label{eqn2b}
3\left(3n^2+2n\right)+1=(3n+1)^2,\quad 3\left(3n^2+4n+1\right)+1=\left(3n+2\right)^2,
\end{equation}
we have, for $N$ odd and $3N+1\neq m^2$ (for all $m\in 2\mathbb{Z}$), 
\[
s_1(N)=0.
\]
Thus, with
\begin{equation}\label{eqn2c}
S_2(q):=\frac{(q)^2_{\infty}}{\left(q^2;q^2\right)_{\infty}},\quad S_3(q):=\sum_{\substack{n\ge 0\\m\in \mathbb{Z}}} (-1)^{m+n} q^{n(n+1)+m\left(3m-1\right)}=:\sum_{k\ge 0}s_3(k)q^k,
\end{equation} 
and using \Cref{lem2}, \eqref{eqn2} becomes
\begin{align*}
{\rm ou}\left(N\right)&\equiv \left[q^N\right]\left(S_2(q)S_3(q)\right)\equiv \sum_{m\in \mathbb{Z}}(-1)^m s_3\left(N-m^2\right)\equiv 2\sum_{\substack{m\ge 1\\2\nmid m}} s_3\left(N-m^2\right)\pmod{4},
\end{align*}
using $s_3(n)=0$ for $n$ odd in the final step. Thus, to conclude the proof, it suffices to show that
\begin{equation}\label{eqn4}
\left|X_1(N)\right|\equiv 0\pmod{2},\ \ \text{where}\ \ X_1(N):=\left\{m\in \mathbb{N}: 2\nmid m, s_3\left(N-m^2\right)\equiv 1\pmod{2}\right\}.
\end{equation}

Next, we show that
\begin{equation}\label{eqn5}
s_3(k)\equiv 1\pmod{2}\Leftrightarrow 3k+1=\ell^2\ \text{with}\ \gcd\left(\ell,6\right)=1.
\end{equation}
 Now, by definition of $s_3(k)$, we have
\begin{align}\label{eqn5a}
s_3(k)=\sum_{(n,m)\in X_2(k)}(-1)^{n+m}\ \text{with}\ X_2(k):=\left\{(n,m)\in \mathbb{N}_0\times \mathbb{Z}: n(n+1)+m(3m-1)=k\right\}.
\end{align}
Define $X_3(k):=\{(n,m)\in \mathbb{N}^2: 2\nmid n,\ \gcd(m,6)=1,\ m^2+3n^2=k\}$ and  $f_1:X_2(k)\rightarrow X_3(12k+4)$ with $f_1(n,m):=(2n+1,|6m-1|)$. Clearly, $f_1$ is well defined and bijective. Thus $|X_2(k)|=|X_3(12k+4)|$ and so
\begin{equation}\label{eqn6}
s_3(k)\equiv 1\pmod{2}\Leftrightarrow \left|X_3\left(12k+4\right)\right|\equiv 1\pmod{2}.
\end{equation}
By \Cref{lem4}, we have, for $3\nmid k$,
\begin{equation}\label{eqn7}
\left|X_3(k)\right|=\begin{cases}
D_3\left(\frac k4\right) &\text{if}\ k\equiv 4\pmod{8},\\
0 &\text{otherwise}.
\end{cases}
\end{equation}
Since, by definition, $D_3$ is multiplicative, for $k\in \mathbb{N}$ with $3\nmid k$, 
\begin{equation}\label{eqn7a}
D_3(k)\equiv 1\pmod{2}\Leftrightarrow k=\ell^2.
\end{equation}
Thus, combining \eqref{eqn6} and \eqref{eqn7}, it follows that
\[
s_3(k)\equiv 1\pmod{2}\Leftrightarrow 3k+1=\ell^2\ \text{with}\ \gcd\left(\ell,6\right)=1.
\]
Since for given $N$ and $m$, such $\ell$ is unique, thus, from \eqref{eqn4}, we get
\[
X_1(N)=\left\{m\in \mathbb{N}: 2\nmid m, \ell^2+3m^2=3N+1\ \text{with}\ \gcd\left(\ell,6\right)=1\right\}.
\]
Define $f_2:X_1(N)\rightarrow X_3(3N+1)$ by $f_2(m):=(m,\sqrt{3N+1-3m^2})$ and we observe that $f_2$ is a bijection. Thus,
\[
\left|X_1(N)\right|=\left|X_3\left(3N+1\right)\right|\equiv 0\pmod{2},
\]
using \eqref{eqn7}, \eqref{eqn7a}, and $3N+1\neq m^2$ for any $m$ even. 

Finally, assume that $3n+1$ is a square. Since $n$ is odd, $3n+1=4m^2$ for some $m\in \mathbb{Z}$ and by (i), ${\rm ou}(n)\not\equiv 0\pmod{4}$. This concludes the proof of \eqref{eqn4}.

Finally, we now prove (iii). By \eqref{eqn2a} and \eqref{eqn2b}, for $N\in \mathbb{N}$ even and $3N+1\neq m^2$ (since, by hypothesis $3N+1\neq m^2_1+3m^2_2$ for any $m_1,m_2\in \mathbb{Z}$), $s_1(N)=0$. Thus, with \eqref{eqn2c}, for $N$ even, \eqref{eqn2} becomes
\begin{align}\label{eqn8}
{\rm ou}(N)&\equiv \sum_{m\in \mathbb{Z}}(-1)^m s_3\left(N-m^2\right)=s_3(N)+2\sum_{m\ge 1}s_3\left(N-4m^2\right)\pmod{4},
\end{align}
using $s_3(n)=0$ for $n$ odd. Moreover, by \eqref{eqn5}, we obtain
\[
s_3\left(N-4m^2\right)\equiv 1\pmod{2}\Leftrightarrow 3N+1=\ell^2+12m^2
\]
and this contradicts the assumption that $3N+1=m^2_1+3m^2_2$ for any $m_1,m_2\in \mathbb{Z}$. Consequently, \eqref{eqn8} yields
\begin{equation}\label{eqn9}
{\rm ou}(N)\equiv s_3(N)\pmod{4}.
\end{equation} 
Since $N$ even and $3N+1\neq m^2_1+3m^2_2$ for any $m_1,m_2\in \mathbb{N}$, $X_2(N)=\emptyset$. Therefore, by \eqref{eqn5a}, $s_3(N)=0$ and so, from \eqref{eqn9}, we conclude the proof.\qedhere 
\end{proof}

\section{Proof of \Cref{thm2}, \Cref{C:GD2}, and \Cref{C:GD1}}\label{sec3}
To prove \Cref{thm2}, we require the following lemmas. Define $c_2(n):=[q^n]((q)_\infty(q^4;q^4)_\infty)$ (for $n\in \mathbb{N}_0$).
\begin{lemma}\label{assistlem0}
We have, for $n\in \mathbb{N}$ and modulo $4$, 
\begin{align*}
c_\omega(4n-3)&\equiv c_1(n-1),\quad c_\omega\left(8n-6\right)\equiv 2c_2(n-1),\quad c_\omega\left(8n-8\right)\equiv 2c_{\psi}(n-1),\\
 c_\omega(4n-1)&\equiv(-1)^n c_\omega(n).
\end{align*}
\end{lemma}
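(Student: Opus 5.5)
The plan is to read off all four congruences from the exact identity \eqref{W1} of \Cref{thm:watson} combined with the $4$-dissection in \Cref{lem:phi4a}. Solving for $q\omega(q)$ gives, over $\mathbb{Z}$,
\[
q\omega(q)=\frac12\Big(\sum_{j=0}^3 q^jF_j\left(q^4\right)-f\left(q^8\right)-2q^3\omega\left(-q^4\right)\Big).
\]
I will sort exponents by their residue modulo $4$. The series $f(q^8)$ only contributes in class $0$, and $q^3\omega(-q^4)$ only in class $3$. Throughout I reduce eta quotients using $(q)_\infty^4\equiv(q^2;q^2)_\infty^2 \pmod 4$ and $(q)_\infty^2\equiv(q^2;q^2)_\infty\pmod 2$.

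\emph{Class $1$.} Here $c_\omega(4n-3)=\frac12[q^{n-1}]F_1(q)=[q^{n-1}]\frac{(q^2;q^2)_\infty^9}{(q)_\infty^6(q^4;q^4)_\infty^2}$. Modulo $4$ we replace $(q)_\infty^{-4}$ by $(q^2;q^2)_\infty^{-2}$ and $(q^4;q^4)_\infty^{-2}$ by $(q^2;q^2)_\infty^{-4}$, which leaves $\frac{(q^2;q^2)_\infty^3}{(q)_\infty^2}$. This gives $c_1(n-1)$.

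\emph{Class $3$.} Here $c_\omega(4n-1)=\frac12[q^{n-1}]F_3(q)-[q^{n-1}]\omega(-q)$. Since $F_3/2$ carries a factor $4$, this is congruent modulo $4$ to $-(-1)^{n-1}[q^{n-1}]\omega(q)=(-1)^nc_\omega(n)$.

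\emph{Class $2$.} Here $c_\omega(8n-6)=[q^{2n-2}]\,2\frac{(q^2;q^2)_\infty^3(q^4;q^4)_\infty^2}{(q)_\infty^4}$. It therefore suffices to know the quotient modulo $2$. Modulo $2$ it is $(q^2;q^2)_\infty^3(q^4;q^4)_\infty\equiv(q^2;q^2)_\infty(q^8;q^8)_\infty$, whose $q^{2n-2}$ coefficient is $c_2(n-1)$.

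\emph{Class $0$ (main obstacle).} Here
\[
c_\omega(8n-8)=\tfrac12\big([q^{2n-2}]F_0(q)-[q^{n-1}]f(q)\big),
\]
so both terms must be controlled modulo $8$. I will substitute \eqref{W2}, $f(q)=\frac{(q)_\infty^3}{(q^2;q^2)_\infty^2}-4\psi(-q)$. The $\psi$ term then contributes $\pm2c_\psi(n-1)\equiv 2c_\psi(n-1)\pmod 4$. It remains to show
\[
[q^{2m}]F_0(q)\equiv[q^m]\frac{(q)_\infty^3}{(q^2;q^2)_\infty^2}\pmod 8,
\]
i.e.\ the even part of $F_0$ agrees with $\frac{(q^2;q^2)_\infty^3}{(q^4;q^4)_\infty^2}$ modulo $8$. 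I would attack this by writing $F_0=\varphi(q)^4\frac{(q^4;q^4)_\infty^2}{(q^2;q^2)_\infty^5}$, where $\varphi(q)=\frac{(q^2;q^2)_\infty^5}{(q)_\infty^2(q^4;q^4)_\infty^2}$ is the theta function.

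The next step is to use the $2$-dissection $\varphi(q)=\varphi(q^4)+2q\psi_2(q^8)$, with $\psi_2(q):=\frac{(q^2;q^2)_\infty^2}{(q)_\infty}$. Expanding the fourth power, the even part of $\varphi(q)^4$ is $\varphi(q^4)^4+24q^2\varphi(q^4)^2\psi_2(q^8)^2+16q^4\psi_2(q^8)^4$. Modulo $8$ this reduces to $\varphi(q^4)^4\equiv\varphi(q^8)^2$, up to the middle term, which I will have to handle separately. I will then compare the result with $\frac{(q)_\infty^3}{(q^2;q^2)_\infty^2}=\varphi(-q)\frac{(q)_\infty}{(q^2;q^2)_\infty}$ under $q\mapsto q^2$, using the same reduction identities. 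If this direct comparison becomes messy, my fallback is to verify the congruence of the two eta quotients modulo $8$ via Sturm's bound on a suitable $\Gamma_0(N)$.
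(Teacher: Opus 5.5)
\paragraph{Verdict.} Your handling of the residue classes $1,2,3 \pmod 4$ is correct and matches the paper's argument. In class $0$ you also reduce, via \eqref{W2}, to the right statement $[q^{2m}]F_0(q)\equiv[q^m]\frac{(q)_\infty^3}{(q^2;q^2)_\infty^2}\pmod 8$, which is the paper's \eqref{eqn10}. The gap is that you never prove this mod-$8$ congruence, and the sketch you give contains a false step.

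\paragraph{The false step.} The congruence $\varphi(q^4)^4\equiv\varphi(q^8)^2\pmod 8$ does not hold. Write $\varphi(q)=1+2T$ with $T=\sum_{n\ge1}q^{n^2}$. Then $\varphi(q)^4=1+8T+24T^2+32T^3+16T^4\equiv1\pmod 8$. On the other hand, $\varphi(q)^2=\sum r_2(n)q^n\equiv1+4q+\cdots\pmod 8$, so $\varphi(q^8)^2\equiv1+4q^8+\cdots\not\equiv1\pmod 8$. If you carried that replacement forward, your comparison with $\frac{(q^2;q^2)_\infty^3}{(q^4;q^4)_\infty^2}$ would be off by a factor $\equiv\varphi(q^8)^{-2}$ and would fail. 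Two smaller points: the ``middle term'' $24q^2\varphi(q^4)^2\psi_2(q^8)^2$ needs no separate treatment, since $24\equiv0\pmod 8$; and the Sturm-bound fallback is not specified (which forms, which level, how to handle extracting the even part modulo a prime power), so it is not yet an argument.

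\paragraph{The missing idea and the fix.} What you need is $(q)_\infty^8\equiv(q^2;q^2)_\infty^4\pmod 8$, obtained by squaring $(q)_\infty^4\equiv(q^2;q^2)_\infty^2\pmod 4$. Equivalently, $\varphi(\pm q)^4\equiv1\pmod 8$. No $2$-dissection is needed.
\begin{itemize}
\item In your own notation, $F_0=\varphi(q)^4\frac{(q^4;q^4)_\infty^2}{(q^2;q^2)_\infty^5}\equiv\frac{(q^4;q^4)_\infty^2}{(q^2;q^2)_\infty^5}\pmod 8$. This is a series in $q^2$, and its $q^{2m}$-coefficient is $[q^m]\frac{(q^2;q^2)_\infty^2}{(q)_\infty^5}$.
\item Also $\frac{(q)_\infty^3}{(q^2;q^2)_\infty^2}=\varphi(-q)^4\frac{(q^2;q^2)_\infty^2}{(q)_\infty^5}\equiv\frac{(q^2;q^2)_\infty^2}{(q)_\infty^5}\pmod 8$.
\end{itemize}
Together these give the required congruence. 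This is exactly the paper's route: it writes $F_0\equiv\frac{(q^2;q^2)_\infty^{11}}{(q^4;q^4)_\infty^6}\pmod 8$ and then applies the same congruence once more. With this in place, your class-$0$ computation closes to $c_\omega(8n-8)\equiv2(-1)^{n-1}c_\psi(n-1)\equiv2c_\psi(n-1)\pmod 4$.
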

\begin{proof}
By \eqref{W1} and \Cref{lem:phi4a}, we have
\[
\sum_{n\ge 0}c_\omega\left(4n+1\right)q^n=\frac{\left(q^2;q^2\right)_\infty^{9}}{(q)_\infty^6\left(q^4;q^4\right)_\infty^2}\equiv \frac{\left(q^2;q^2\right)^3_{\infty}}{\left(q\right)^2_{\infty}}\pmod{4}.
\]
This concludes the proof of the first claim. 

Next, again using \eqref{W1} and \Cref{lem:phi4a}, we obtain
\[
\sum_{n\ge 0}c_\omega\left(4n+2\right)q^n=2\frac{\left(q^2;q^2\right)^3_\infty\left(q^4;q^4\right)^2_\infty}{(q)^4_\infty}\equiv 2\left(q^2;q^2\right)_\infty\left(q^8;q^8\right)_\infty\pmod{4}.
\]
Letting $n\mapsto 2n$, the second claim follows.

Next, using the fact $(q)_\infty^{8}\equiv(q^2;q^2)_\infty^{4}\pmod 8$ and the definition of $F_0$ (see \Cref{lem:phi4a}), we get, for $n\in \mathbb{N}_0$, 
\[
F_0(q)\equiv\frac{\left(q^2;q^2\right)_\infty^{11}}{\left(q^4;q^4\right)_\infty^6}\pmod 8.
\]
Thus, by \eqref{W1} and \Cref{lem:phi4a}, it follows that, for $n\in \mathbb{N}_0$,
\begin{align}\label{eqn10}
c_f(n)+2c_\omega\left(8n\right)=\left[q^{2n}\right]\left(F_0(q)\right)\equiv \left[q^{n}\right]\left(\frac{(q)_\infty^3}{\left(q^2;q^2\right)_\infty^2}\right)\pmod 8.
\end{align}
By \eqref{W2}, we have, for $n\in \mathbb{N}_0$,
\[
c_f(n)+4(-1)^nc_\psi(n)=\left[q^{n}\right]\left(\frac{(q)_\infty^3}{\left(q^2;q^2\right)_\infty^2}\right).
\]
Combining this with \eqref{eqn10} and then multiplying by $\frac 12$, we conclude the proof of the third claim.

Finally, from \eqref{W1}, \Cref{lem:phi4a}, we get, $n\in \mathbb{N}_0$,
\[
2c_{\omega}\left(4n+3\right)+2(-1)^nc_\omega(n+1)=\left[q^{n}\right]\left(F_3(q)\right)\equiv 0\pmod{8}.\qedhere 
\]
\end{proof}

Now, for $n$ odd, we relate ${\rm ou}^*(n)$ to $c_\omega(n)$ modulo $4$.

\begin{lemma}\label{assistlem2}
	For $n\in\N$ odd, we have
	\[
	{\rm ou}^*(n)\equiv \pm c_\omega(n)\pmod4.
	\]
\end{lemma}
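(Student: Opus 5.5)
The plan is to compare the generating functions directly modulo $4$ using \Cref{lem3}. We have ${\rm OU}^*(q)=\sum_{n\ge0}q^{2n+1}(-q;q^2)_n^2$. By \Cref{lem3}, for each $n\ge1$ the difference $(-q;q^2)_n^2-(-q^2;q^4)_n$ is, modulo $4$, an odd polynomial in $q$; for $n=0$ the difference is $0$. Multiplying by $q^{2n+1}$ turns an odd polynomial into an even one. Hence the odd-degree part of ${\rm OU}^*(q)$ is congruent modulo $4$ to the odd-degree part of
\[
\sum_{n\ge0}q^{2n+1}\left(-q^2;q^4\right)_n .
\]
This last series has only odd exponents, since $(-q^2;q^4)_n$ is a polynomial in $q^2$. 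So for odd $N$ we get ${\rm ou}^*(N)\equiv[q^N]\sum_{n\ge0}q^{2n+1}(-q^2;q^4)_n\pmod4$.

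Next I identify this series with a third order mock theta function. Writing $x=q^2$, it equals $q\sum_{n\ge0}x^n(-x;x^2)_n$. Comparing with $\nu(q)=\sum_{n\ge0}(q;q^2)_n(-q)^n$ gives
\[
\nu(-x)=\sum_{n\ge0}(-x;x^2)_n x^n .
\]
Therefore the series is exactly $q\,\nu(-q^2)$. Applying \eqref{thm:fine} with $q\mapsto q^2$:
\[
q\,\nu(-q^2)=q^3\omega(q^4)+q\frac{(q^8;q^8)_\infty^3}{(q^4;q^4)_\infty^2}.
\]
So for odd $N$, ${\rm ou}^*(N)$ is congruent modulo $4$ to the coefficient of $q^N$ in $q^3\omega(q^4)+q\,(q^8;q^8)^3_\infty/(q^4;q^4)^2_\infty$.

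It remains to match this with $c_\omega(N)$. By the last congruence of \Cref{assistlem0}, $c_\omega(4n-1)\equiv(-1)^nc_\omega(n)\pmod4$. Since $c_\omega(n)=[q^{n-1}]\omega(q)$, the coefficient of $q^{4n-1}$ in $q^3\omega(q^4)$ is $c_\omega(n)$. Thus for $N=4n-1\equiv3\pmod4$:
\begin{itemize}
\item the eta-quotient term contributes nothing, as it is supported on exponents $\equiv1\pmod4$;
\item hence ${\rm ou}^*(N)\equiv c_\omega(n)\equiv\pm c_\omega(N)\pmod4$.
\end{itemize}

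For $N=4n+1$, only the eta-quotient term contributes. Its coefficient of $q^{4n+1}$ is
\[
[q^n]\frac{(q^2;q^2)_\infty^3}{(q)_\infty^2}=c_1(n).
\]
By the first congruence of \Cref{assistlem0}, $c_1(n)\equiv c_\omega(4n+1)\pmod4$. Hence ${\rm ou}^*(N)\equiv c_\omega(N)$ in this case, which completes the proof with the sign $+$ for $N\equiv1$ and $(-1)^n$ for $N=4n-1$.

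The main obstacle I expect is the first step: carefully justifying the parity bookkeeping from \Cref{lem3}. One must confirm that the odd polynomial multiplied by $q^{2n+1}$ lands only in even degrees, and handle the $n=0$ term separately. The identification $\sum x^n(-x;x^2)_n=\nu(-x)$ also needs a careful sign check.
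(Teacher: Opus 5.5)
Your proposal is correct and follows the paper's proof essentially step for step. You use \Cref{lem3} to reduce the odd-degree coefficients to those of $q\nu(-q^2)$, then apply \eqref{thm:fine}, and finish with the first and fourth congruences of \Cref{assistlem0} for $N\equiv1$ and $N\equiv3\pmod4$ respectively. Your explicit treatment of the $n=0$ term and the sign check $\nu(-x)=\sum_{n\ge0}x^n(-x;x^2)_n$ fill in details the paper leaves implicit.
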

\begin{proof}
	By \Cref{lem3}, we have that modulo $4$, for $n\in \mathbb{N}$,
	\[
	q^{2n+1}\left(\left(-q;q^2\right)_n^2-\left(-q^2;q^4\right)_n\right)\in \mathbb{Z}\left[q^2\right].
	\]
	Thus, by \eqref{eq:gf}, the definition of $\nu$ (see above \Cref{thm:watson}), and \eqref{thm:fine}, it follows that, for $n$ odd,
	\begin{align*}
	{\rm ou}^*(n)\equiv \left[q^{n}\right]\left(q\nu\left(-q^2\right)\right)\equiv \left[q^{n}\right]\left(q^3\omega\left(q^4\right)+q\frac{\left(q^8;q^8\right)_\infty^3}{\left(q^4;q^4\right)_\infty^2}\right)\pmod{4}.
	\end{align*}
	If $n\equiv 1\pmod{4}$, comparing coefficients of $q^{4m+1}$ (with $m\in \mathbb{N}_0$) on both sides, by the first identity of \Cref{assistlem0}, we get
	\begin{equation}\label{eqn3}
	{\rm ou^*}(n)\equiv c_1(m)\equiv c_\omega(4m+1)=c_{\omega}(n)\pmod{4}.
	\end{equation}
	
	Similarly, if $n\equiv 3\pmod{4}$, comparing coefficients of $q^{4m-1}$ (with $m\in \mathbb{N}$) on both sides, by the fourth identity of \Cref{assistlem0}, we obtain
	\[
	{\rm ou}^*(n)\equiv (-1)^{m} c_{\omega}(n)\pmod{4}.
	\]
	Combining this with \eqref{eqn3}, we conclude the proof.\qedhere 
\end{proof}

Next, we show the parity of $c_2(n)$.
\begin{lemma}\label{lem:h}
	For $n\in \mathbb{N}_0$, we have
	\[
	c_2(n)\ \text{is odd}\Leftrightarrow 24n+5\in S^{[1]}_2.
	\]
\end{lemma}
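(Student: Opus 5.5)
The plan is to reduce $c_2(n)$ modulo $2$ to a count of representations by the binary quadratic form $3x^2+2y^2$ of discriminant $-24$, and then to read off the parity from a twisted divisor sum, as was done for $D_3$ in the proof of \Cref{thm1}.

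\textbf{Step 1 (theta series modulo $2$).} Since $(q^4;q^4)_\infty\equiv(q)_\infty^4\equiv(q)_\infty^3(q)_\infty\pmod 2$, we also have $(q)_\infty(q^4;q^4)_\infty\equiv (q)_\infty^3(q^2;q^2)_\infty\pmod 2$. By Jacobi's identity, $(q)_\infty^3\equiv\sum_{a\ge0}q^{a(a+1)/2}\pmod 2$. By \Cref{lem2} with $q\mapsto q^2$, $(q^2;q^2)_\infty\equiv\sum_{b\in\Z}q^{b(3b-1)}\pmod 2$. Substituting $q\mapsto q^{24}$ and multiplying by $q^5$, the identity
\[
24\left(\tfrac{a(a+1)}{2}+b(3b-1)\right)+5=3(2a+1)^2+2(6b-1)^2
\]
shows that $c_2(n)$ is congruent modulo $2$ to
\[
\#\left\{(x,y)\in\N^2:\ 3x^2+2y^2=24n+5,\ x \text{ odd},\ \gcd(y,6)=1\right\}.
\]
Here $b\mapsto|6b-1|$ is a bijection from $\Z$ onto the positive integers coprime to $6$, exactly as for the map $f_1$ in \Cref{sec2}.

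\textbf{Step 2 (counting representations).} Put $N=24n+5$. The congruence $N\equiv 5\pmod{24}$ forces $x$ to be odd and $\gcd(y,6)=1$ automatically, and it forces $xy\neq0$. So the count above equals $\tfrac14 r_{2,3}(N)$, where $r_{2,3}$ counts all representations by $2x^2+3y^2$. The forms of discriminant $-24$ are $x^2+6y^2$ and $2x^2+3y^2$, which lie in different genera. A check modulo $24$ shows $x^2+6y^2\not\equiv5\pmod{24}$, so the classical formula (the discriminant $-24$ analogue of \Cref{lem5} and \Cref{lem4}) gives
\[
r_{2,3}(N)=2\sum_{d\mid N}\chi_{-24}(d)=:2D_{24}(N).
\]
Hence $c_2(n)\equiv \tfrac12 D_{24}(N)\pmod 2$, so $c_2(n)$ is odd if and only if $D_{24}(N)\equiv2\pmod 4$.

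\textbf{Step 3 (multiplicativity).} The function $D_{24}$ is multiplicative. On prime powers, $D_{24}(p^a)=a+1$ if $\chi_{-24}(p)=1$, and $D_{24}(p^a)\in\{0,1\}$ (according as $a$ is odd or even) if $\chi_{-24}(p)=-1$. Therefore $D_{24}(N)\equiv2\pmod4$ if and only if the following three conditions hold:
\begin{itemize}
\item every prime with $\chi_{-24}(p)=-1$ divides $N$ to an even power;
\item exactly one prime $p$ with $\chi_{-24}(p)=1$ has $p$-exponent $\equiv1\pmod4$;
\item all other primes divide $N$ to even exponents.
\end{itemize}
Equivalently, $N=p^{4k+1}m^2$ with $p\nmid m$ and $\chi_{-24}(p)=1$.

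\textbf{Step 4 (matching with $S^{[1]}_2$).} This is the step I expect to need the most care, because the definition of $S^{[1]}_2$ uses $\left(\frac{-2}{p}\right)=-1$ rather than $\chi_{-24}(p)=1$. Since $N=p^{4k+1}m^2$ with $\gcd(m,6p)=1$, we get $\left(\frac{-2}{p}\right)=\left(\frac{-2}{N}\right)=-1$ because $N\equiv5\pmod 8$, and $\chi_{-24}(p)=\chi_{-24}(N)=\chi_{-24}(5)=1$. So given $N\equiv5\pmod{24}$, the two conditions on $p$ coincide. Conversely, any element of $S^{[1]}_2$ that is congruent to $5$ modulo $24$ satisfies the condition of Step 3. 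This proves the lemma.
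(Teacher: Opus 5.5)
Your proof is correct, but it takes a different route from the paper's.

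\textbf{The paper's route.} The paper expands $(q)_\infty(q^4;q^4)_\infty$ directly as a product of two pentagonal series (\Cref{lem2}). This gives $24n+5=(6r-1)^2+4(6s-1)^2$. It then uses the fibre count $|X_4(n)|=\frac14|X_5(n)|$ and the swap bijection $|X_5(n)|=\frac12 r_2(24n+5)$ to reach
$c_2(n)\equiv\frac12\sum_{d\mid 24n+5}\chi_{-4}(d)\pmod 2$. The only arithmetic input is the sum-of-two-squares formula (\Cref{lem5}).

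\textbf{Your route.} You first reduce modulo $2$ to $(q)_\infty^3(q^2;q^2)_\infty$ and apply Jacobi's identity. This lands you on the form $3x^2+2y^2$ of discriminant $-24$.

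\textbf{Comparison.}
\begin{itemize}
\item Your bookkeeping is lighter. The conditions $x$ odd, $\gcd(y,6)=1$, $xy\neq0$ follow automatically from $N\equiv5\pmod{24}$, and no swap bijection is needed.
\item Your key input is heavier. You need Dirichlet's formula $r_{1,6}(N)+r_{2,3}(N)=2\sum_{d\mid N}\chi_{-24}(d)$ for the class-number-two discriminant $-24$, plus the genus separation via $x^2+6y^2\not\equiv5\pmod 8$. This is classical (one class per genus) but is not in the paper. You should cite it explicitly rather than describe it as an analogue of \Cref{lem4} and \Cref{lem5}.
\item Both routes need a final modulo-$8$ reconciliation with the condition $\left(\frac{-2}{p}\right)=-1$ in the definition of $S_2^{[1]}$. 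The paper passes between $p\equiv1\pmod 4$ and $\left(\frac{-2}{p}\right)=-1$ without comment, in both directions. This step actually uses $24n+5\equiv5\pmod 8$, which forces $p\equiv5\pmod 8$. Your Step 4 carries out the corresponding matching with $\chi_{-24}(p)=1$ explicitly and correctly.
\end{itemize}

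\textbf{Minor point.} In Step 1 the justifying chain should read $(q)_\infty(q^4;q^4)_\infty\equiv(q)_\infty^5=(q)_\infty^3(q)_\infty^2\equiv(q)_\infty^3(q^2;q^2)_\infty\pmod 2$.
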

\begin{proof}
By \Cref{lem2}, we have
\[
(q)_\infty\left(q^4;q^4\right)_\infty=\sum_{r,s\in\Z}(-1)^{r+s}q^{\frac{r(3r-1)}2+2s(3s-1)}.
\]
Thus, by definition of $c_2(n)$ (see above \Cref{assistlem0}), 
\begin{equation}\label{eqn11}
c_2(n)\equiv\left|X_4(n)\right|\pmod2\ \ \text{with}\ \ X_4(n):=\left\{(r,s)\in\mathbb{Z}^2: \frac{r(3r-1)}2+2s(3s-1)=n\right\}.
\end{equation}
Note that
\[
X_4(n)=\left\{(r,s)\in\mathbb{Z}^2:r\equiv s\equiv 5\pmod{6}, r^2+4s^2=24n+5\right\}.
\]
Setting $X_5(n):=\{(r,s)\in\mathbb{Z}^2: r^2+4s^2=24n+5\}$, define a map $f_3:X_5(n)\rightarrow X_4(n)$ by $f_3(r,s):=(-(\frac r3)r,-(\frac s3)s)$. We observe that $f_3\mid_{X_4(n)}={\rm Id}$ and $f_3$ is surjective. Moreover, for $(r,s)\in X_4(n)$, $f^{-1}_3((r,s))\in \{(\pm r,\pm s)\}$, i.e., every fibre of $f_3$ has exactly $4$ distinct elements and thus, by \eqref{eqn11},
\begin{equation}\label{eqn12}
c_2(n)\equiv \left|X_4(n)\right|=\frac 14\left|X_5(n)\right|\pmod{2}.
\end{equation}

Next, define
\begin{align*}
X_6(n)&:=\left\{(r,s)\in\mathbb{Z}^2: r^2+s^2=24n+5\right\},\quad X^{[1]}_6(n):=\left\{(r,s)\in X_6: s\ \text{even}\right\},\\
  X^{[2]}_6(n)&:=\left\{(r,s)\in X_6: r\ \text{even}\right\}.
\end{align*}
Clearly, $|X_6(n)|=r_2(24n+5)=|X^{[1]}_6(n)|+|X^{[2]}_6(n)|$ because $X^{[1]}_6(n)\cap X^{[2]}_6(n)=\emptyset$. Define $f_4:X^{[2]}_6(n)\mapsto X^{[1]}_6(n)$ by $f_4((r,s)):=((s,r))$ and clearly, $f_4$ is a bijection. Therefore, 
\[
\left|X_6(n)\right|=r_2(24n+5)=2\left|X^{[1]}_6(n)\right|.
\]
Moreover, define $f_5:X_5(n)\rightarrow X^{[1]}_6(n)$ by $f_5((r,s)):=(r,2s)$ and noting the fact that $f^{-1}_5((r,s))=(r,\frac s2)$ is the inverse map, $f_5$ is a bijection. Therefore, by \eqref{eqn12} and \Cref{lem5}, we have
\begin{equation}\label{eqn13}
c_2(n)\equiv \frac{r_2\left(24n+5\right)}{8}=\frac{1}{2}\sum_{d\mid 24n+5}\chi_{-4}(d)\pmod{2}.
\end{equation}
Write\footnote{Here and throughout, for primes $\ell$, $\nu_{\ell}(n)$ denotes the $\ell$-adic valuation of $n$.} $24n+5=\prod_{\ell\mid 24n+5}\ell^{\nu_{\ell}(24n+5)}$ and using the fact that $\chi_{-4}$ is completely multiplicative, it is not hard to see that,
\begin{equation}\label{eqn14}
\frac{1}{2}\sum_{d\mid 24n+5}\chi_{-4}(d)=\frac 12\prod_{\ell\mid 24n+5}\sum_{j=0}^{\nu_\ell\left(24n+5\right)}\chi_{-4}(\ell)^j
\end{equation}
and 
\begin{equation}\label{eqn15}
\sum_{j=0}^{\nu_{\ell}\left(24n+5\right)}\chi_{-4}(\ell)^j=\begin{cases}\nu_\ell\left(24n+5\right)+1&\ell\equiv1\pmod4,\\ 1&\ell\equiv3\pmod 4\ \text{with}\ \nu_\ell\left(24n+5\right)\ \text{even},\\
0&\ell\equiv3\pmod 4\ \text{with}\ \nu_\ell\left(24n+5\right)\ \text{odd}.\end{cases}
\end{equation}

Now, assume that $24n+5\in S^{[1]}_2$. By definition of $ S^{[1]}_2$, $p\equiv 1\pmod{4}$, and consequently, by \eqref{eqn15}, we have
\[
\sum_{j=0}^{\nu_{p}\left(24n+5\right)}\chi_{-4}(p)^j\equiv 2\pmod{4},\qquad \sum_{\substack{j=0\\\ell\neq p}}^{\nu_{\ell}\left(24n+5\right)}\chi_{-4}(\ell)^j\equiv 1\pmod{2}.
\]
Plugging this into \eqref{eqn13} and then by \eqref{eqn13}, we conclude that $c_2(n)$ is odd.

Finally, we assume that $c_2(n)$ is odd. By \eqref{eqn13} and \eqref{eqn14}, this is equivalent to 
\begin{align}\label{eqn16}
&\sum_{d\mid 24n+5}\chi_{-4}(d)\equiv 2\pmod{4}\Leftrightarrow \!\!\!\prod_{\ell\mid 24n+5}\sum_{j=0}^{\nu_\ell\left(24n+5\right)}\chi_{-4}(\ell)^j\equiv 2\pmod{4}\nonumber\\
&\hspace{4 cm}\Leftrightarrow \!\!\!\prod_{\substack{\ell\mid 24n+5\\\ell\equiv 1\pmod{4}}}\!\!\!\sum_{j=0}^{\nu_\ell\left(24n+5\right)}\chi_{-4}(\ell)^j\!\!\prod_{\substack{\ell\mid 24n+5\\\ell\equiv 3\pmod{4}}}\!\!\!\sum_{j=0}^{\nu_\ell\left(24n+5\right)}\chi_{-4}(\ell)^j\equiv 2\pmod{4}.
\end{align}
Thus, by \eqref{eqn15}, for $\ell\equiv 3\pmod{4}$ and $\ell\mid 24n+5$, 
\begin{equation}\label{eqn17}
\nu_{\ell}\left(24n+5\right)\equiv 0\pmod{2}.
\end{equation}
Therefore, \eqref{eqn16} reduces to
\begin{align*}
&\prod_{\substack{\ell\mid 24n+5\\\ell\equiv 1\pmod{4}}}\!\!\!\sum_{j=0}^{\nu_\ell\left(24n+5\right)}\chi_{-4}(\ell)^j\equiv 2 \pmod{4}\Leftrightarrow \prod_{\substack{\ell\mid 24n+5\\\ell\equiv 1\pmod{4}}}\left(\nu_{\ell}\left(24n+5\right)+1\right)\equiv 2\pmod{4}\\
&\Leftrightarrow \nu_2\left(\prod_{\substack{\ell\mid 24n+5\\\ell\equiv 1\pmod{4}}}\left(\nu_{\ell}\left(24n+5\right)+1\right)\right)=1\Leftrightarrow \sum_{\substack{\ell\mid 24n+5\\\ell\equiv 1\pmod{4}}}\nu_2\left(\nu_{\ell}\left(24n+5\right)+1\right)=1.
\end{align*}
Since $\nu_2(n)\ge 0$ for $n\in \mathbb{N}$, this directly implies that there exists a unique prime, say $p\equiv1\pmod{4}$ with $p\mid 24n+5$ and 
\[
\nu_{p}\left(24n+5\right)+1\equiv 2\pmod{4}\Leftrightarrow \nu_p\left(24n+5\right)\equiv 1 \pmod{4}.
\]
Combining this and \eqref{eqn17}, $24n+5\in S^{[1]}_2$. This concludes the proof.\qedhere 
\end{proof}

Finally, we show congruences for $c_\omega(n)$ modulo $4$.

\begin{lemma}\label{assistlem1}
	Let $n\in \mathbb{N}$. Then, we have,
	\[
	c_\omega(n)\equiv 
	2\pmod4\Leftrightarrow\ 3n-1\in S_2.
	\]
\end{lemma}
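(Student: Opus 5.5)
The plan is to split according to the residue of $n$ modulo $8$, using the reductions of \Cref{assistlem0} together with \Cref{thm:apsy}. In each class the task is then to match the known characterization (\Cref{L:CC}, \Cref{lem:h}, \Cref{thm:d}) with membership of $3n-1$ in $S_2$. The class $n\equiv 3\pmod 4$ needs strong induction on $n$. Throughout I use two elementary facts.

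\emph{Fact 1.} An element of $S_2^{[1]}$ has the form $p^{4k+1}m^2$ with $p\nmid m$ and $\left(\frac{-2}{p}\right)=-1$, so $p\equiv 5,7\pmod 8$. Since $p$ is odd, an odd element forces $m$ odd, and then it is $\equiv p\equiv 5,7\pmod 8$.

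\emph{Fact 2.} Since $p$ is odd, the $2$-part of $\varepsilon p^{4k+1}m^2$ is $\varepsilon$ times an even power of $2$. Consequently an odd number lies in $S_2$ iff it lies in $S_2^{[1]}$. A number $2x$ with $x$ odd lies in $S_2$ iff $x\in S_2^{[1]}$: indeed $\varepsilon=1$ is impossible, because then the exact power of $2$ dividing it would be an even power. Finally $4x\in S_2$ iff $x\in S_2$, by replacing $m$ with $2m$ or $m/2$.

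The cases are as follows.

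\textbf{Case $n=4m+1$.} By \Cref{assistlem0}, $c_\omega(n)\equiv c_1(m)\pmod 4$. By \Cref{L:CC}, this is $\equiv 2$ iff $6m+1\in S_2^{[1]}$. Since $3n-1=2(6m+1)$, Fact 2 shows this is equivalent to $3n-1\in S_2$.

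\textbf{Case $n=8m-6$.} By \Cref{assistlem0}, $c_\omega(n)\equiv 2c_2(m-1)\pmod 4$. So $c_\omega(n)\equiv 2$ iff $c_2(m-1)$ is odd, which by \Cref{lem:h} holds iff $24m-19\in S_2^{[1]}$. Since $24m-19=3n-1$ is odd, Fact 2 gives the claim.

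\textbf{Case $n=8m$.} By \Cref{assistlem0} (with $n\mapsto m+1$), $c_\omega(8m)\equiv 2c_\psi(m)\pmod 4$. By \Cref{thm:d}, $c_\psi(m)$ is odd iff $24m-1=p^{4k+1}m'^2$ with $p\nmid m'$. Here $3n-1=24m-1\equiv 7\pmod 8$, so $p$ and $m'$ are odd and $p\equiv 7\pmod 8$. Hence $\left(\frac{-2}{p}\right)=-1$ automatically, and the condition is exactly $3n-1\in S_2^{[1]}$, i.e. $3n-1\in S_2$ by Fact 2.

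\textbf{Case $n\equiv 4,6\pmod 8$.} Here $c_\omega(n)\equiv 0\pmod 4$ by \Cref{thm:apsy}. On the other side, $3n-1\equiv 3$ or $1\pmod 8$, which is odd and not $\equiv 5,7\pmod 8$. So $3n-1\notin S_2$ by Facts 1 and 2.

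\textbf{Case $n=4m-1$.} By \Cref{assistlem0}, $c_\omega(n)\equiv \pm c_\omega(m)\pmod 4$, and since $-2\equiv 2\pmod 4$ the sign is irrelevant. As $m<n$, strong induction gives $c_\omega(m)\equiv 2$ iff $3m-1\in S_2$. Since $3n-1=4(3m-1)$, Fact 2 turns this into $3n-1\in S_2$. The base of the induction is supplied by the other cases; for instance $n=1$ falls under the case $n\equiv 1\pmod 4$.

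I expect the main obstacle to be the bookkeeping of the $2$-adic part and of the residue of the prime $p$ modulo $8$. This has to be checked carefully in the case $n\equiv 0\pmod 8$, where the condition $\left(\frac{-2}{p}\right)=-1$ must be recovered from Wang's prime-free statement, and in the induction step for $n\equiv 3\pmod 4$. One should also confirm that the index shifts in \Cref{assistlem0} cover all $n\ge 1$ in each residue class.
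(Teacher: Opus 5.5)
Your proposal is correct and follows the paper's proof essentially verbatim. It uses the same split by $n \bmod 8$, the same reductions through \Cref{assistlem0} to $c_1$, $c_2$, $c_\psi$ combined with \Cref{L:CC}, \Cref{lem:h} and \Cref{thm:d}, the same appeal to \Cref{thm:apsy} for $n\equiv 4,6\pmod 8$, and the same strong induction via $3n-1=4(3m-1)$ for $n\equiv 3\pmod 4$. Your Facts 1 and 2, and your observation that $24m-1\equiv 7\pmod 8$ forces $p\equiv 7\pmod 8$ (so $\left(\frac{-2}{p}\right)=-1$ comes for free in Wang's criterion), make explicit the $2$-adic and mod-$8$ bookkeeping that the paper leaves implicit.
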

\begin{proof}
First, we consider the case $n\equiv 0\pmod{8}$. Then, with $n=8m$ ($m\in \mathbb{N}_0$), $3n-1=24m-1$ is odd and by \Cref{assistlem0},  
\[c_\omega(n)\equiv2c_{\psi}(m)\pmod 4.\]
Thus, to prove the claim, it suffices to show that
\[
c_{\psi}(m)\ \text{is odd}\Leftrightarrow 24m-1\in S_2.
\] 
This follows from \Cref{thm:d}.

Now, we assume that $n\equiv1\pmod 4$. With $n=4m+1$ ($m\in\mathbb N_0$) we have $3n-1=2(6m+1)$, and by
\Cref{assistlem0}, it suffices to prove that
\[
c_1(m)\equiv 2\pmod 4 \Leftrightarrow 6m+1\in S_2^{[1]}.
\]
This holds by \Cref{L:CC}.

Next, we consider $n\equiv 2\pmod{8}$. Then, with $n=8m+2$ ($m\in \mathbb{N}_0$), $3n-1=24m+5$ is odd and by \Cref{assistlem0},  
\[c_\omega(n)\equiv2c_{2}(m)\pmod 4.\]
Thus, to prove the claim, it suffices to show that
\[
c_{2}(m)\ \text{is odd}\Leftrightarrow 24m+5\in S_2.
\] 
This follows from \Cref{lem:h}.

The case $n\equiv 4,6\pmod{8}$ is trivial because of \Cref{thm:apsy} and $3n-1\notin S_2$. Assuming on the contrary that $3n-1\in S_2$. This directly implies that $p\equiv r\pmod{8}$ with $r\in \{3,1\}$, respectively. Thus, $(\frac{-2}{p})=1$ and this contradicts the definition of $S_2$. 

Finally, we consider the remaining case $n\equiv 3\pmod{4}$. Write $n=4m-1$ with $m\in\mathbb N$. Note that, for the base case $n=3$, the claim holds trivially. Moreover, $m<n$ (as
$n-m=3m-1\ge2$), so by the induction hypothesis
\begin{equation}\label{E:IH}
c_\omega(m)\equiv 2\pmod 4\Leftrightarrow 3m-1\in S_2 .
\end{equation}
By the fourth congruence of \Cref{assistlem0}, 
\[
c_\omega(n)\equiv(-1)^mc_\omega(m)\pmod 4.
\]
Thus, we obtain
\begin{equation}\label{E:red1}
c_\omega(n)\equiv2\pmod 4\Leftrightarrow c_\omega(m)\equiv2\pmod 4 .
\end{equation}
Moreover, $3n-1=12m-4=4(3m-1)$. For $k\in\mathbb N$, it is straightforward to see that
\begin{equation}\label{E:red2}
4k\in S_2\iff k\in S_2.
\end{equation}
Combining this, \eqref{E:red1}, and \eqref{E:IH}, we conclude
\[
c_\omega(n)\equiv2\pmod 4\Leftrightarrow c_\omega(m)\equiv2\pmod 4\Leftrightarrow 3m-1\in S_2\Leftrightarrow 4(3m-1)=3n-1\in S_2.
\] 
This concludes the proof.\qedhere 
\end{proof}

\begin{proof}[Proof of \Cref{thm2}]
The first case trivially follows from \Cref{thm:BLparity}.

Now, we note that $S_1\cap S_2=\emptyset$ because for $m\in S_1$, $\nu_p(m)$ is even, whereas for $m\in S_2$, $\nu_p(m)$ is odd. Thus, to conclude the proof, it remains to show that, for $n\in \mathbb{N}$ odd, 
\begin{equation*}
{\rm ou}^*(n)\equiv 2 \pmod{4} \Leftrightarrow 3n-1\in S_2.
\end{equation*}
By \Cref{assistlem2} and \Cref{assistlem1}, we get
\[
{\rm ou}^*(n)\equiv 2 \pmod{4}\Leftrightarrow  c_{\omega}(n)\equiv 2 \pmod{4}\Leftrightarrow 3n-1\in S_2.\qedhere 
\]
\end{proof}

\begin{proof}[Proof of \Cref{C:GD2}]
Let $n\in\mathbb N_0$, $N:=2M^2\ell^2n+R_{M,\ell,j}$, $B:=M\ell^{-\nu_\ell(M)}$ and $K:=3\ell n+3j+\ell$. Then
\[
3N-1=2\ell^{2\nu_\ell(M)+1}B^2K,\qquad \ell\nmid BK,\quad K\equiv3j\pmod\ell,\quad K\equiv\ell\pmod 3 .
\]
Now, we show that
\[
N\ \text{is odd},\quad 3N-1\notin S_1,\quad 3N-1\notin S_2.
\]
Since $\gcd(M,6)=1$ and $\ell\ge5$ is prime, $\gcd(M\ell,6)=1$. Thus $\frac{2M^2\ell^2+1}3$ is odd and so, $N$ is odd. 

Next, note that $3N-1\notin S_1$ because $\nu_\ell(3N-1)$ is odd. 

Now, suppose that $3N-1\in S_2$. Write $3N-1=\varepsilon p^{4a+1}m^2$ with $\varepsilon\in \{1,2\}$.  Since, by definition of $S_2$,
\[
\nu_p\left(3N-1\right)=4a+1,\quad \nu_q(3N-1)\ \text{is even}\ \text{for all primes}\ q\neq p.
\] 
Moreover, as $\nu_\ell(3N-1)=2\nu_\ell(M)+1$ is odd, we get $p=\ell$. Thus, $\nu_\ell(M)=2a$ which contradicts the assumption that $\nu_{\ell}(M)$ is odd. Since $p=\ell$ and by definition of $S_2$, $(\frac{-2}\ell)=-1$, we have $\ell\equiv5,7\pmod 8$. In particular, condition (1) of \Cref{C:GD2} fails to hold as well. Note that
\[
2\ell^{2\nu_\ell(M)+1}B^2K=\varepsilon p^{4a+1}m^2\Rightarrow \varepsilon m^2=2B^2K\ \ \text{with}\ \varepsilon\in \{1,2\}.
\]
Write $K=\frac{\varepsilon s^2}{2}$ with $s:=\frac{m}{B}$ and $3\nmid s, \ell\nmid s$ because $K$ is not divisible by $3$ and $\ell$. For $K=s^2$,
\[
\ell\equiv K=s^2\equiv1\pmod 3\ \ \text{and}\ \ \left(\frac{-2}\ell\right)=-1\Rightarrow \ell\equiv5,23\pmod{24}
\]
and $3j\equiv K=s^2\pmod\ell$ implies that $(\frac{3j}\ell)=1$. Thus condition (3) of \Cref{C:GD2} are not satisfied. Similarly, for $K=2s^2$, $\ell\equiv K=2s^2\equiv2\pmod 3$ and $(\frac{-2}\ell)=-1$ imply that $\ell\equiv5,23\pmod{24}$ and $6j\equiv2K=(2s)^2\pmod\ell$ imply that $(\frac{6j}\ell)=1$. It is clear that, for $\ell\equiv5\pmod{24}$, 
$(\frac2\ell)=(\frac3\ell)=-1$, and for $\ell\equiv23\pmod{24}$, $(\frac2\ell)=(\frac3\ell)=1$.
So $(\frac6\ell)=1$ and $(\frac j\ell)=1$. Hence condition (2) of \Cref{C:GD2} fails to hold. Thus, for $3N-1\in S_2$, all the conditions in \Cref{C:GD2} fail to satisfy and thus contradicts the assumption. Therefore, $3N-1\notin S_2$ and by \Cref{thm2}, we conclude the proof.

Finally, assume that $\nu_\ell(M)=2a$ is even and that the conditions (1)--(3) do not hold. If $\ell\equiv7,13\pmod{24}$ and
$(\frac{3j}\ell)=1$, we choose $s\in\mathbb N$ with $s^2\equiv3j\pmod\ell$, $s\equiv1\pmod 3$. Moreover, we set
$n:=\frac{s^2-3j-\ell}{3\ell}$. Then $K=s^2$ and $3N-1=2\ell^{4a+1}(Bs)^2$. 

Similarly, if $\ell\equiv5,23\pmod{24}$ and
$(\frac j\ell)=1$, then $(\frac{6j}\ell)=1$ as $\left(\frac6\ell\right)=1$, so we choose $s\in\mathbb N$ with
$2s^2\equiv3j\pmod\ell$, $s\equiv1\pmod 3$. Letting $n:=\frac{2s^2-3j-\ell}{3\ell}$, $K=2s^2$ and
$3N-1=\ell^{4a+1}(2Bs)^2$. In both cases $n\in\mathbb Z$, since the numerator is divisible by $\ell$ and by $3$ (as $3\nmid$), and $n\in \mathbb{N}_0$ for $s$ sufficiently large. Since $\ell\nmid Bs$ and $(\frac{-2}\ell)=-1$, we get $3N-1\in S_2$ and so, by \Cref{thm2}, $\mathrm{ou}^*(N)\equiv2\pmod 4$. Since $s$ can be chosen arbitrarily large, this holds for infinitely many $n\in\mathbb{N}_0$.\qedhere   
\end{proof}

\section{Open questions}\label{sec4}
We conclude this paper by proposing the following questions and conjectures related to ${\rm ou}(n)$ and ${\rm ou}^*(n)$.
\begin{conjecture}\label{con1}
	Let $\a\in \mathbb{N}_{\ge 3}$, $A_{\alpha}, B_{\alpha}\in \mathbb{N}$.	Determine (if exists) arithmetic progression $A_{\alpha}n+B_{\alpha}$ such that, for  all $n\in \mathbb{N}_0$, 
	\[{\rm ou}\left(A_{\alpha}n+B_{\alpha}\right)\equiv 0\pmod{2^{\alpha}}.\]
\end{conjecture}
\begin{conjecture}\label{conj2}
Let $\alpha\in \mathbb{N}_{\ge 3}$. Then there does not exist $C_{\alpha}, D_{\alpha}\in \mathbb{N}$, such that for all $n\in \mathbb{N}_0$, 
\[{\rm ou}^*\left(C_{\alpha}n+D_{\alpha}\right)\equiv 0\pmod{2^{\alpha}}.\]
\end{conjecture}

\end{document}